\numberwithin{equation}{section}
\newcommand\nonc{noncommutative}
\newcommand\gvec[2][n]{({#2}_1,\ldots,{#2}_{#1})} % generic vector
\newcommand\glist[2][n]{{#2}_1,\ldots,{#2}_{#1}} % generic list wo par
\newcommand{\avec}[1]{(a_1,\ldots,a_{#1})}
\newcommand{\alist}[1]{a_1,\ldots,a_{#1}}
\newcommand{\specA}{\spec_{\sA}}
\begin{document}

\date{\today}

\title[Divided Differences and Multivariate Holomorphic Calculus]{%
Divided Differences and Multivariate Holomorphic Calculus}

\author{Luiz Hartmann}
\address{Departamento de Matem\'atica,
	Universidade Federal de S\~ao Carlos (UFSCar),
	Brazil}
\email{hartmann@dm.ufscar.br, luizhartmann@ufscar.br}
\urladdr{http://www.dm.ufscar.br/profs/hartmann}
\thanks{Partially support by FAPESP (2022/16455-6), PROBAL CAPES/DAAD (8881.700909/2022-01) and we
	gratefully
	acknowledge the financial
	support of the Hausdorff Center for Mathematics, Bonn.}

\author{Matthias Lesch}
\address{Mathematisches Institut,
	Universit\"at Bonn,
	Endenicher Allee 60,
	53115 Bonn,
	Germany}\email{ml@matthiaslesch.de, lesch@math.uni-bonn.de}
\urladdr{www.matthiaslesch.de, www.math.uni-bonn.de/people/lesch}

\subjclass[2020]{Primary 47A60  ; Secondary 46L87,58B34, 65D05}
\keywords{Divided difference, Banach algebra, Rearrangement Lemma, holomorphic functional calculus}

%\copyrightinfo{2014-2015}{Matthias Lesch}

%% ABSTRACT =================================== {{{4
\begin{abstract}
We review the multivariate holomorphic functional calculus for tuples in a
\emph{commutative} Banach algebra and establish a simple ``na\"ive'' extension
to commuting tuples in a general Banach algebra. The approach is na\"ive in the
sense that the na\"ively defined joint spectrum maybe too big.  The advantage of
the approach is that the functional calculus then is given by a simple concrete
formula from which all its continuity properties can easily be derived.

We apply this framework to multivariate functions arising as divided differences of a
univariate function. This provides a rich set of examples to which our na\"ive
calculus applies. Foremost, we offer a natural and straightforward proof of the
Connes-Moscovici Rearrangement Lemma in the context of the multivariate
holomorphic functional calculus.  Secondly, we show that the Daletski-Krein type
noncommutative Taylor expansion is a natural consequence of our calculus. Also
Magnus' Theorem which gives a nonlinear differential equation for the $\log$ of
the solutions to a linear matrix ODE follows naturally and easily from our
calculus.  Finally, we collect various combinatorial related formulas.
\end{abstract}

\maketitle

%\listoffigures
\tableofcontents
% END TITLE AND TOC ======================== }}}

%\clearpage
%\commentary{
%\section*{A. Notes} %{{{1
%\begin{itemize}
%  \item Our expansions are not at all formal but real and convergent.
%\end{itemize}
%}
%\clearpage

%##############################################################################
\section{Introduction}
\label{s.Intro}
%##############################################################################

This paper is a follow-up of \cite{Les2014}, which together with
\cite{ConMos2011, LesMos2015} serves as the primary motivation for the present
work. \cite{Les2014} shows that some of the rather involved combinatorial
formulas arising in the Spectral Geometry of noncommutative tori
\cite{ConMos2011, LesMos2015} find their natural explanation in terms of the
classical calculus of divided differences.  Furthermore, a more conceptual proof
of the Connes-Moscovici Rearrangement Lemma \cite[Sec. 6.2]{ConMos2011} in the
formalism of divided differences and a multivariate smooth functional calculus
was given.

An important pattern are expressions of the form
\[
   F(a\pup 0;,\ldots,a\pup n;)(b_1\cldots b_n),
\]
where $F$ is a multivariate function applied to algebra elements $a\pup
0;,\ldots,a\pup n;$ in (a topological tensor product) $\sAn[n+1]$ of a Banach
algebra \emph{paired} with the algebra elements $b_1,\ldots, b_n$.  The pairing
is most naturally defined if $\sAn[n+1]$ is equipped with the projective tensor
product topology (denoted by $\sAn[n+1]_\pi$).  The crucial property of the
latter which is used extensively is that in this case the multiplication map
\begin{equation} \label{eq-intro-1}
  \mu_{n+1}: \sAn[n+1]_\pi\longrightarrow \sA,\; a_0\otidots a_n
  \mapsto a_0\cldots a_n
\end{equation}
is continuous. In \cite{Qui1985}, Quigg characterizes the von Neumann algebras for which $\mu_{1}$ is continuous relative to the spatial $C^*$-norm (see \cite[Theorem 4.6]{Qui1985}). In particular, Quigg proved that $\mu_{1}$ is not continuous when $\sA$ is the algebra of bounded linear operators on a separable infinite-dimensional Hilbert space (see \cite[Lemma 4.4]{Qui1985}).

Lacking a continuous functional calculus in the Banach algebra $\sAn[n+1]_\pi$,
in \cite{Les2014} we circumvented the issues by establishing a functional
calculus, if $\sA$ is $C^*$, for smooth multivariate functions; it is a
$*$-homomorphism from the involutive Fr\'echet algebra $C^\infty(U^{n+1})$ to
the involutive Banach algebra $\sAn[n+1]_\pi$ (\cite[Theorem 3.2]{Les2014}).
This approach exploited the fact that for an open subset $U\subset \R^n$ the
algebra $C^\infty(U)$ with its natural Fr\'echet topology is nuclear and hence
$C^\infty(U^{n+1})$ is naturally isomorphic to the projective as well as the
injective tensor product of $n+1$ factors of $C^\infty(U)$.

While this approach worked, for several reasons it seems a bit unnatural.
Firstly, it is a little odd that relatively deep (though now classical) results
about nuclear spaces are needed.  Secondly, it requires $\sA$ to be a
$C^*$-algebra, and the functional calculus is a ``fake'' version of the
continuous functional calculus.  On the other hand the projective tensor product
$\sAn[n+1]_\pi$ is defined for any Banach algebra and therefore using just the
multivariate \emph{holomorphic} functional calculus seems most natural. While
holomorphicity is certainly narrower than smoothness, for the Rearrangement
Lemma holomorphic functions (even rational ones) suffice and we have the benefit
that the theory can be developed naturally in the context of Banach algebras.

This is the starting point of the current paper. We take it also as an excuse to
review the quite intricate history of the multivariate holomorphic functional
calculus which is by far not just an afterthought to the one variable case and
which is normally not taught in standard functional analysis courses. While a
uniform theory of a multivariate holomorphic functional calculus remains
elusive, various approaches have been proposed
\cite{All1971,Tay1970,Gam1969,Zam1979}. In general, it is only well-defined in a
commutative Banach algebra. We show in Section \ref{ss.NHFC}, however, that
there is a na\"ive version of the multivariate holomorphic functional calculus
which has the benefit of being unambiguously defined for commuting tuples in a
noncommutative Banach algebra and which is given by a simple concrete formula
from which all its continuity properties can easily be derived (Theorem
\ref{thm:NHFC}).

We also mention several more recent approaches to the multivariate holomorphic
functional calculus. McIntosh and Pryde \cite{McP1987}
studied commuting operators on a Banach space embedded in a Banach module over
a Clifford algebra. Brian Jefferies explored related ideas in greater depth in
his monograph {\it Spectral Properties of Noncommuting Operators} \cite{Jef2004},
focusing on n-tuples of noncommuting operators. Finally, the
monograph by Colombo, Sabadini, and Struppa \cite{CSS2011}
proposes a novel approach to constructing a general functional
calculus for not necessarily commuting n-tuples of operators, as well as a
functional calculus for quaternionic operators. We mention these works here
for completeness; a detailed comparison is beyond the scope of our relatively
elementary presentation.

Divided differences of a holomorphic function naturally give rise to multivariate
holomorphic functions. For such functions, our na\"ive multivariate functional
calculus works seamlessly for commuting tuples in any Banach algebra. As
applications we show a noncommutative Newton interpolation formula,
noncommutative Taylor formulas in the tradition of Krein-Daletskii
\cite{Dal1990},\cite{Dal1998}, cf.  also the ``asymptotic analysis'' versions of
Paycha \cite{Pay2011} (see also \cite[Theorem 10]{HMvN}), and a short proof of Magnus' Theorem
(Sec.\ref{ss.expMagnus}).  Finally we will give a natural and straightforward
proof of the Connes-Moscovici Rearrangement Lemma in the context of the
multivariate holomorphic functional calculus.

The paper is organized as follows. Section \ref{s.MHFC} reviews the mulivariate
holomorphic functional calculus for a commuting tuple in a commutative Banach
algebra and our ``na\"ive'' extension for commuting tuples in a general Banach
algebra.  Section \ref{s.DDmultifc} studies divided differences of a holomorphic
functions with the methods of the na\"ive holomorphic functional calculus
framework. In Section \ref{s.Applications} we present the various applications
mentioned before.  Finally in the Appendix \ref{s.Appendix} we collect various
combinatorial formulas about divided differences which are needed in the
applications section.

\subsection*{Acknowledgement} This project was begun after \cite{Les2014} was
finished and has long been stalled. Over the years we have benefited from
numerous discussions with our mathematical friends. It is impossible to mention
all of them. Explicitly, we would like to mention our long term collaborator
Boris Vertman whose mathematical enthusiasm is particularly inspiring in times
of frustration. The second named author would also like to thank Elmar Schrohe,
Hermann Schulz-Baldes, and Achim Klenke for invitations to give talks about the
project. We would like to thank the anonymous referee for valuable suggestions,
in particular we owe the important reference \cite{Qui1985} to her/him.

%##############################################################################
\section{Multivariate holomorphic functional calculus}
\label{s.MHFC}
%##############################################################################
%**********************************************************
\subsection{Notation}
\label{ss.notationMultiFuncCal}
%**********************************************************

We fix some notation which will be used frequently. $\N$ denotes the set of
natural numbers including $0$.  Multiindices will usually be denoted by greek
letters. Sums of the form $\sum_\ga$ will be over $\ga\in\N^n$ with further
restrictions indicated below the $\sum$ sign.

During this section script letters $\sA, \sB,\ldots$ denote \emph{unital} Banach
algebras. $\sAn_\pi$ denotes the projective tensor product completion of
$\sA\otidots\sA$ ($n$ factors), \textit{cf., e.g.,} \cite{Gel1959}. That is
$\sAn_\pi$ is the completion of $\sAn$ with respect to the norm
\[
\| x \|_\pi = \inf \sum_i \| a_1^{(i)}\|\cldots \| a_{n}^{(i)}\|,
\]
where the infimum is taken over all representations of $x\in\sAn$ as a finite
sum $\sum_i a_1^{(i)}\otidots a_n^{(i)}$.  $\sAn_\pi$ is a Banach algebra.

Given a subset $U\subset \C^n$ we denote by $\sO(U)$ the Fr\'echet algebra of
holomorphic functions on $U$ with the topology of uniform convergence on compact
subsets.

%**********************************************************
\subsection{Review of the holomorphic functional calculus } %{{{1
\label{ss.RHF}
%**********************************************************

The holomorphic functional calculus in one variable can already be found in
Gelfand's fundamental paper on normed rings \cite[p. 20]{Gel1941}: given an
element $a$ in a Banach algebra $\sA$ there is a unique continuous homomorphism
$\theta_a$ from the Fr\'echet algebra $\sO(U)$ into $\sA$ sending the function
$\id_U:z\mapsto z$ to $a$.

The generalization to $f\avec n$ for several commuting Banach algebra elements
and a holomorphic function $f$ of $n$ variables is surprisingly complex and from
Gelfand's paper it took almost 30 years until the theory reached a mature state.
More precisely, let $a=\avec n$ be $n$ commuting elements in $\sA$ and let
$U\subset \C^n$ be an open neighborhood  of the \emph{joint spectrum}
$\specA\avec n$ (to be discussed below) one is aiming for a continuous
\emph{algebra homomorphism} $\theta:\sO(U)\to \C$ sending the coordinate
function $z_j$ to $a_j$.

This aim is obstructed by various difficulties. Firstly, in a general Banach
algebra there is no general accepted notion of joint spectrum for a commuting
tuple $a=\avec n$. One therefore has to be more modest and embed $\alist n$ into
a commutative Banach algebra where the joint spectrum can be defined using ideal
theory; this will be explained below.  For commuting operators on a Banach space
there is a notion of joint spectrum which is independent of the choice of an
ambient commutative Banach algebra \cite{Tay1970-JFA} as well as a holomorphic
functional calculus based on an algebraic approach to the Cauchy-Weil integral
\cite{Tay1970}.  This celebrated approach, however, comes with the price of
being dauntingly complex.

We will be more modest here and briefly review the calculus if $\alist n$ are
elements of a \emph{commutative} Banach algebra $\sA$.  Historically, there
exist basically two approaches. One using polynomial approximation and the
Oka-Weil approximation theorem (see
\cite{All1969,All1971,Wael1954,Hor1990,Sto1971}, for a modern textbook treatment
see \cite[Part III]{All2011}), and the second one being closer to Gelfand's
Cauchy integral by using a direct integral description of the functional
calculus (see \cite{Shi1955,ArCa1955,Bou1967,Gam1969}).  Both approaches have in
common, that the uniqueness statement is more involved\footnote{%
%% FOOTNOTE
See also Zame \cite{Zam1979} whose uniqueness statement is stronger than what we
quote below. On the other hand even Zame's statement is not a plain extension of
Gelfand's clear and straightforward one variable statement as he needs to
\emph{assume} a version of the spectral mapping theorem. The latter is a
consequence in $1$D. However, the statement of Zame is for one fixed tuple
$a = \avec n$, see \cite[Theorem 3.5]{Zam1979}, while the uniqueness statement
we quote talks about \emph{all} tuples at once.}.
%% END Footnote
\label{Zamefoot}
We will use the latter approach, following mostly the excellent
monograph \cite[Chap. III]{Gam1969}.

So therefore from now on we assume that the Banach algebra $\sA$ is commutative
and unital. Let $M_{\sA}$ be the maximal ideal space of $\sA$, realized as the
set of continuous unital algebra homomorphisms $\phi:\sA \to \C$. This is a
compact Hausdorff space equipped with the weak-*-topology as a subset of the
unit ball in the dual $\sA'$; $\phi\leftrightarrow \ker \phi$ is the
identification between $\phi$ and the maximal ideal $\ker\phi\subset \sA$. Given
$a:=\avec n\in \sA^n$, the \emph{joint spectrum} of $a$ is denoted by
$\specA(a) = \specA\avec n$ and defined as the set
\begin{equation}\label{eq:RHF.1}
\specA(a)
    :=\bigsetdef{ ( \phi(a_1),\ldots,\phi(a_n) )\in \C^n}{%
        \phi\in M_{\sA}}.
\end{equation}
The joint spectrum is a nonempty compact subset of $\C^n$, and if $\sA$ is
finitely generated then $\specA(a)$ is \emph{polynomially convex}, cf.
\cite[Sec. III.1]{Gam1969}. Moreover, $z=\gvec z \not\in \specA\avec n$ is
equivalent to the existence of $\glist g \in \sA$ such that
\begin{equation}\label{eq:RHF.2}
\sum_{j=1}^{n} (z_j-a_j)\cdot g_j = 1_{\sA}.
\end{equation}
It is worth noting that the joint spectrum depends on the algebra, as demonstrated by the following example.
\begin{example}\label{ex:1}
Let $\Omega_{r} = \setdef{z\in \C}{1\leq |z| \leq r}$, where $1\leq r $. For
$r>1$, let $\sA_r$ be the algebra of continuous functions on $\Omega_r$ that are
holomorphic in the interior of $\Omega_r$; for $r=1$ let $\sA_1 = C(S^1)$ be the
algebra of continuous functions on the circle of radius one.  Using the maximum
principle one finds that for $f\in\sA_r$ the norm is given by
\begin{equation}\label{eq:ex1.1}
   \|f\|_{\sA_r} = \max\bl \max_{|z|=1} |f(z)|,
                      \max_{|z|=r} |f(z)| \br.
\end{equation}
Denote by $a_r\in\sA_r$ the function $z\mapsto z$ viewed as an element of
$\sA_r$.  Then one checks that $\spec_{\sA_r}(a_r) = \Omega_r $, for all $r$.
Moreover, if $r<r'$ then $\sA_{r'} \subset \sA_{r}$ is naturally a subalgebra
and under this inclusion $a_{r'}$ is identified with $a_r$. Furthermore,
$\spec_{\sA_r}(a_r)\subsetneqq \spec_{\sA_{r'}}(a_{r'})$.

When $r=1$ then $\sA_1$ is a $C^\ast$-algebra and therefore the spectrum is
stable, \ie for any $C^\ast$-algebra
$\sB \supset \sA_1$, $\spec_{\sB}(a_1) = \spec_{\sA_1}(a_1)$.

On the other hand, consider the Banach subalgebra $\sC_r\subset \sA_r$ generated
by $a_r$. That is any $f\in\sC_r$ is, as a function on $\Omega_r$, the uniform
limit of a sequence of polynomials $p_n(a_r)$ in $a_r$. Hence, by the maximum
principle, the polynomials $p_n(a_r)$ converge uniformly on the full ball
$B_r:=\bigsetdef{z\in\C}{|z|\le r}$ and hence for $f\in\sC_r$ we have, cf.
\eqref{eq:ex1.1}
\begin{equation}\label{eq:ex1.2}
   \|f\|_{\sC_r} = \max_{|z|=r} |f(z)|
         = \max_{|z|\le r} |f(z)|,
\end{equation}
and thus we have, again by the maximum principle, natural isometric inclusions
\begin{equation}
     \sC_r\hookrightarrow C(B_r),
     \sC_r\hookrightarrow \sA_r \hookrightarrow C(\Omega_r)
\end{equation}
and the continuous inclusion $\sC_{r'}\subset \sC_{r}, r<r'$.

The spectrum $\spec_{\sC_r}(a_r)$ is equal (!) to
$B_r\supsetneqq \spec_{\sA_r}(a_r)=\Omega_r$.
\end{example}

The holomorphic functional calculus, as stated in \cite[Theorem III.4.1]{Gam1969},
is summarized in the next theorem.

\begin{theorem}\label{theo.gamelin}
Let $\sA$ be a unital \emph{commutative} Banach algebra and let $\specA(a)$ be
the joint spectrum of $a:=(a_1,\ldots,a_n)\in \sA^n$, for some $n\in \N^\ast$.
For an open set $U\subset\C^n$ let $\sO(U)$ be the Fr{\'e}chet algebra of
holomorphic functions on $U$ equipped with the topology of compact($=$locally
uniform) convergence.

Then there is a unique \emph{family} of continuous linear maps\footnote{%
% FOOTNOTE
The uniqueness statement here is a little involved: the family is parametrized
by the $a\in\sA^n$, $U\supset\specA(a)$ open, and uniqueness is guaranteed only
for the whole family.  This was later improved by Zame \cite[Theorem
3.5]{Zam1979} whose uniqueness statement refers only to one tuple $a$.  See,
however the footnote on page \pageref{Zamefoot}.}
% END FOOTNOTE
\begin{equation}\label{DDNote-RG2.1}
	\Theta_a:=\Theta_a^U:\sO(U) \to \sA, \quad a\in\bigcup_{n=1}^\infty \sA^n,
        \; U\supset \specA(a), U\text{ open}
\end{equation}
such that:

\textup{1.} For a polynomial, $F\in \C[z_1,\ldots,z_n]$,
of the form
\[
F(z)= F(z_1,\ldots,z_n )
            =\sum\limits_{\alpha\in \N^n}
               \lambda_\alpha\cdot z_1^{\alpha_1}\cdots z_n^{\alpha_n}
            =: \sum\limits_{\ga\in\N^n} \gl_\ga \cdot z^\ga,
\]
we have
\[
	\Theta_a(F) =: F(a) = F(a_1,\ldots,a_n)
       = \sum_{\alpha\in \N^n} \lambda_\alpha\cdot a_1^{\alpha_1}\cdots a_n^{\alpha_n}
       =: \sum\limits_{\ga\in\N^n} \gl_\ga \cdot a^\ga.
\]

\textup{2.} For $F\in \sO(U)$, if $a_{n+1},\ldots,a_m\in \sA$ and if $\tilde F$
is the trivial extension of $F$ to $U\times \C^{m-n}$ defined by
$\tilde F(z_1,\ldots,z_m) = F(z_1,\ldots,z_n)$ then, denoting
$\tilde{a}:=(a_1,\ldots,a_m)$,
\[
    \Theta_{\tilde a}(\tilde F) = \tilde F(\tilde a) = F(a) = \Theta_a(F).
\]

\textup{3.} If $\{F_k\}_{k\in \N}\subset \sO(U)$ is a sequence which converges
locally uniformly to $F\in \sO(U)$ then
$\Theta_a(F_k) \rightarrow \Theta_a(F)$.
\end{theorem}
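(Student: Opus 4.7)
The plan is to prove existence by constructing $\Theta_a$ through a Cauchy--Weil integral on a polynomial polyhedron supplied by the Shilov--Arens--Calder\'on lemma, and then to deduce uniqueness from the Oka--Weil polynomial approximation theorem; the overall structure follows \cite[Chap.~III]{Gam1969}.

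For existence I would proceed as follows. Given $a=\avec n$ and an open $U\supset \specA(a)$, the Shilov--Arens--Calder\'on lemma provides additional elements $b_{n+1},\ldots,b_m\in\sA$ and a bounded polynomial polyhedron
\[
  \Pi = \bigsetdef{(z,w)\in\C^n\times\C^{m-n}}{|p_j(z,w)|<1,\ j=1,\ldots,N}
\]
whose projection onto the first $n$ coordinates lies in $U$ and which contains $\spec_{\sA}(\tilde a)$, with $\tilde a:=(a_1,\ldots,a_n,b_{n+1},\ldots,b_m)$. Using \eqref{eq:RHF.2} applied to $\tilde a$ at each point $\zeta\in\C^m\setminus\spec_{\sA}(\tilde a)$, together with a Koszul-complex argument to ensure holomorphic dependence on $\zeta$, I would build a Cauchy--Weil kernel $K(\zeta,\tilde a)$ and then set
\[
  \Theta_a(F) := \frac{1}{(2\pi i)^m}\int_{\partial_0 \Pi} \tilde F(\zeta)\,K(\zeta,\tilde a)\,d\zeta,
\]
where $\partial_0\Pi$ denotes the distinguished boundary and $\tilde F$ is the trivial extension of $F$ from $U$ to $U\times \C^{m-n}$.

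The hard part will be showing that the right-hand side does not depend on the choice of polyhedron, the defining polynomials, or the enlargement $(b_{n+1},\ldots,b_m)$; this is the classical Cauchy--Weil independence theorem, proved by a homotopy argument in the Koszul complex of the $m$-tuple $(\zeta_j - \tilde a_j)_j$. Granting well-definedness, the three listed properties follow directly: polynomial compatibility (1) reduces to an iterated Cauchy integral formula over $\partial_0\Pi$; extension invariance (2) is automatic, since further ``dummy'' variables can always be absorbed into the enlargement; and continuity (3) follows because the integral depends continuously on the sup norm of $\tilde F$ over $\partial_0\Pi$, which in turn depends continuously on $F$ in the locally uniform topology of $\sO(U)$.

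Finally, for uniqueness, suppose $\Theta^{(1)}$ and $\Theta^{(2)}$ are two such families. They agree on polynomials by (1). For arbitrary $F\in\sO(U)$, I would enlarge the tuple as above so that $\spec_{\sA}(\tilde a)$ lies inside a polynomial polyhedron (which is polynomially convex); the Oka--Weil theorem then presents $\tilde F$ as a locally uniform limit of polynomials in a neighborhood of $\spec_{\sA}(\tilde a)$, and continuity (3) combined with extension invariance (2) forces $\Theta^{(1)}_a(F)=\Theta^{(2)}_a(F)$. As the excerpt's footnote emphasizes, uniqueness here refers to the full family parametrized by $a$; the sharper single-tuple statement of Zame \cite[Theorem~3.5]{Zam1979} requires an additional spectral mapping assumption.
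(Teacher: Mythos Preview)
Your plan is correct and in the same spirit as the paper---both existence and uniqueness rest on the Arens--Calder\'on lemma and Oka--Weil approximation, as in \cite[Chap.~III]{Gam1969}---but the explicit construction of $\Theta_a$ differs from the one the paper sketches.

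The paper does \emph{not} enlarge the tuple in order to reach a polynomial polyhedron and then integrate a Cauchy--Weil kernel over the distinguished boundary. Instead it uses the smooth partition device of \cite[Sec.~III.4]{Gam1969}: one chooses $w\in C^\infty_c(U;\sA)$ with $w\equiv 1$ near $\specA(a)$ and constructs smooth $\sA$-valued functions $u_1,\ldots,u_n$ on $\C^n$ with
\[
  \sum_{j=1}^n u_j(z)(z_j-a_j)=1-w(z),
\]
so that the $(n,n)$-form $du_1\wedge dz_1\wedge\ldots\wedge du_n\wedge dz_n$ is compactly supported in $U$; then
\[
  \Theta_a(F)=n!\int_U F\,du_1\,\dsl z_1\cdots du_n\,\dsl z_n.
\]
The Arens--Calder\'on enlargement appears in the paper only for the \emph{uniqueness} step. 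Your route front-loads the enlargement into the existence construction and integrates over $\partial_0\Pi$, which is closer to the original Shilov/Arens--Calder\'on approach. The smooth-form formula has the advantage that it works directly on the original $n$-tuple and makes Remark~\ref{rem_2.3}.2 (independence of $U$ and of the ambient algebra) immediate; your polyhedral integral makes the polynomial compatibility (1) and the extension invariance (2) more transparent, at the cost of a heavier well-definedness argument.
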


\begin{remark}\label{rem_2.3}

1. It then \emph{follows} that $\Theta_\bullet$ is natural in all respects
conceivable (\cite{Gam1969}, after Theorem III.4.1), in particular, $\Theta_a$
is a homomorphism of algebras and the Gelfand transform
$\widehat{F(a_1,\ldots,a_n)}$ of $F(a_1,\ldots,a_n)$ equals
$F(\hat{a}_1,\ldots,\hat{a}_n)$, \ie the Gelfand transforms of
$a_1,\ldots, a_n$ inserted into $F$.

2. We emphasize furthermore, that if $U\subset V$ and $F\in\sO(V)$ then
$\Theta_a^V(F)=\Theta_a^U(F_{|U})$.  Also by construction it follows that if
$\sA\subset\sB$ is contained in a larger commutative Banach algebra then for
$a\in\sA^n$ the value of $\Theta_a(F)$ will be the same regardless of whether it
is taken w.r.t. to $\sA$ or w.r.t. $\sB$.
\end{remark}

\newcommand\thmGamelin{\ref{theo.gamelin}}
\begin{proof}[Idea of Proof]
The main technical step in the proof is to show that if $w\in C^\infty_c(U;\sA)$
is such that $w\equiv1$ in a neighborhood of $\spec_\sA(a)$ then there are smooth
$\sA$-valued functions $u_j\in C^\infty(\C^n;\sA)$, $j=1,\ldots, n$, such that
\[
\sum_{j=1}^n u_j(z)(z-a_j) = 1 - w(z).
\]
It then follows that the differential form
$du_1\wedge dz_1\wedge\ldots \wedge du_n\wedge dz_n$
is supported on $U$. For $F\in \sO(U)$ one then puts
\begin{equation}\label{DDNote-RG2.2}
\Theta_a(F):= F(a) = F(a_1,\ldots,a_n)= n!\int_U F du_1 \dsl z_1 \cdots du_n
\dsl z_n,
\end{equation}
where $\dsl z_j = \frac{1}{2\pi i} d z_j$.
The integral is indeed independent of the choices ($u, w$) made\footnote{
%FOOTNOTE
It is worth nothing that for $\sA=\C^n, a=(a_1,\ldots, a_n)\in\C^n$
\Eqref{DDNote-RG2.2} is the Cauchy-Weil formula for the holomorphic function $F$ \cite{Wei1935}.}
%\mpar{Quote Needed for the Cauchy-Weil formula, some French
%paper by Andre Weil}.
%END FOOTNOTE
Uniqueness then follows by invoking the Oka-Weil approximation Theorem
\cite[Theorem III.5.1]{Gam1969} and the Arens-Calder\'on lemma
\cite[Theorem III.5.2]{Gam1969}.

We add that the claims of Remark \ref{rem_2.3} 2. about the dependence on $U$
resp. on the algebra $\sA$ are an immediate consequence of the formula
\Eqref{DDNote-RG2.2} and its independence on the choices made.
\end{proof}

%**********************************************************
\subsection{Na\"ive holomorphic functional calculus}
%\label{DDNote-ss2.2}
\label{ss.NHFC}
%**********************************************************

We will need a holomorphic functional calculus for several commuting elements in
a \emph{noncommutative} Banach algebra $\sA$.  The most na\"ive approach that
comes to mind would be the following: given commuting elements $\alist n\in\sA$
of a not necessarily commutative Banach algebra $\sA$. Consider the Banach
algebra $\sB\subset \sA$ generated by $\alist n$ and use the functional calculus
in $\sB$. However, this has several disadvantages: firstly, as the Example
\ref{ex:1} shows, the joint spectrum of $\avec n$ in $\sB$ might be larger than
the joint spectrum in $\sA$. Secondly, the Banach subalgebra $\sB$ depends on
the tuple.  Apart from that, the formula for the $\Theta_a$ in Theorem
\ref{theo.gamelin} above is not very explicit, not to speak of the difficulties
of the Taylor approach to the functional calculus, see Section \ref{ss.RHF}
above.

We propose therefore an elementary na\"ive approach very close to the one
outlined before but with the advantage that firstly the joint spectrum does not
become too large, secondly the calculus does not depend on the choice of a
specific commutative subalgebra and thirdly, and maybe most importantly, that
there is a concrete contour integral formula similarly to the one in the one
variable case.
%\MLchange{this text may be used for the introduction and/or abstract}

%The extension of the holomorphic functional calculus from one operator to
%several commuting operators is not at all trivial. We will, fortunately, not
%need to go into the technical details of the notion of joint spectrum,
%\cf~\refArensTaylor; for a nice survey see \cite{All1971} resp. for a modern
%textbook treatment \cite{All2011}.  However, for our purposes the most na\"ive
%approach will suffice.

Therefore, let $\sA$ be a unital Banach algebra (not necessarily commutative)
and let $a=\avec n\in\sA^n$ be commuting elements. We put
\begin{equation} \label{eq:NHFC.1}  %\label{DDNote-G2.3}
  \specn(a):= \prod_{j=1}^n \spec_{\sA}(a_j);
\end{equation}
the subscript \enquote{na} indicates that this is a \enquote{na\"ive} definition
of joint spectrum.  The previous notion of joint spectrum leads to a potentially
smaller subset of $\specn(a)$.
%All other more refined notions of joint
%spectrum (\textit{cf.}~\refArensTaylor) lead to potentially smaller subsets
%of $\specn(a)$.

However, with this notion of joint spectrum we have the following result:

%Therefore, the multivariate
%holomorphic functional calculus $f(a)$ in Theorem \ref{theo.gamelin}
%will reduce to the following if the function $f$ is holomorphic in
%an open neighborhood of $\specn(a)$.

\begin{theorem}[Na\"ive holomorphic functional calculus]
\label{thm:NHFC}
Let $\sA$ be a unital (noncommutative) Banach algebra and let
$a = \avec n\in \sA^n$ be  an $n$-tuple of commuting elements.  Furthermore, let
$U_j\supset \specA(a_j)$, $1\le j\le n$, be an open neighborhood and put
$U = \prod_{j=1}^{n} U_j$.

If $\sB\subset \sA$ is any commutative unital Banach algebra containing
$\alist n$ \emph{and} their resolvents
$(\lambda-a_j)^{-1}$, $\lambda \in \C\setminus\specA(a_j)$, then in the
commutative Banach algebra $\sB$ we have
\[
\spec_{\sB}(a)\subset\specn(a).
\]
Furthermore, the map $\Theta_a^\sB$ of Theorem \ref{theo.gamelin} \footnote{%
%FOOTNOTE
The superscript $\sB$ in $\Theta_a^\sB$ indicates that a priori the functional
calculus of Theorem \ref{theo.gamelin} depends on the ambient algebra $\sB$.
%END FOOTNOTE
}
is given by
\begin{equation}\label{eq:NHFC.2}    %\label{DDNote-RG2.4}
 \Theta_a^\sB(f) = f(a) = f\avec n
 := \int_{\gamma_1} \cdots \int_{\gamma_n}
      f(z)\cdot \prod_{j=1}^{n}(z_j-a_j)^{-1} \dsl z,
\end{equation}
where $\dsl z = \dsl z_1\cdots \dsl z_n$,
$\dsl z_j = \frac{1}{2\pi i} d z_j$,
and $\gamma_j, 1\le j\le n,$ are integration cycles\footnote{An
integration cycle is a closed chain of rectifiable paths.} in
$U_j\setminus \specA(a_j)$ which encircle $\specA(a_j)$
once, resp.

Additionally, denoting by $\sA^n_U$ the set of those $a\in \sA^n$ with
$\specn(a)\subset U$ the map $\cN: \sA_U^n \times  \sO(U) \to \sA$ defined by
the right hand side of \eqref{eq:NHFC.2}
\begin{equation}\label{eq:NHFC.3}
(a,f)\mapsto \cN_a(f) := \int_{\gamma_1} \cdots
\int_{\gamma_n} f(z)\cdot \prod_{j=1}^{n}(z_j-a_j)^{-1} \dsl z = f(a),
\end{equation}
is continuous, where $\sA^n_U$ is an open set of $\sA^n$ and $\sO(U)$ is the Fréchet algebra of holomorphic functions on $U$ (as in Theorem \ref{theo.gamelin}) .
\end{theorem}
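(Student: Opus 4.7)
The plan is to split the theorem into three parts: the spectrum inclusion, the integral formula for $\Theta_a^\sB$, and the continuity/openness statements.

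For $\spec_\sB(a)\subset\specn(a)$, I would take $z=\gvec z\notin\specn(a)$, so that some coordinate $z_{j_0}\notin\spec_\sA(a_{j_0})$; by hypothesis $(z_{j_0}-a_{j_0})^{-1}\in\sB$, and setting $g_{j_0}:=(z_{j_0}-a_{j_0})^{-1}$ and $g_j:=0$ for $j\ne j_0$ verifies \eqref{eq:RHF.2} in $\sB$. Along the way I would note that $\spec_\sB(a_j)=\spec_\sA(a_j)$, since $\sB\subset\sA$ gives one inclusion and the hypothesis on resolvents gives the other.

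The heart of the proof is the identification $\Theta_a^\sB(f)=\cN_a(f)$. I would choose a strictly smaller product neighborhood $W=\prod_j W_j$ with $\specn(a)\subset W$ and $\overline{W_j}\subset U_j$, and take each $\gamma_j\subset U_j\setminus\overline{W_j}$ encircling $\spec_\sA(a_j)\subset W_j$ once. The iterated one-variable Cauchy formula then represents the restriction $f|_W$ as a Bochner integral with values in the Fréchet space $\sO(W)$:
\[
f|_W=\int_{\gamma_1}\cdots\int_{\gamma_n} f(\zeta)\prod_{j=1}^n (\zeta_j-\,\cdot\,)^{-1}\dsl\zeta.
\]
Riemann sums $f_N$ of this integral are rational functions lying in $\sO(W)$ and converging to $f|_W$ locally uniformly. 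Applying the continuous homomorphism $\Theta_a^\sB:\sO(W)\to\sB$ termwise, and using the extension axiom (part 2 of Theorem \ref{theo.gamelin}) together with the one-variable Cauchy integral to evaluate $\Theta_a^\sB\bigl((\zeta_j-\,\cdot\,)^{-1}\bigr)=(\zeta_j-a_j)^{-1}$, I obtain $\Theta_a^\sB(f_N)$ as a Riemann sum for $\cN_a(f)$. Passing to the limit using continuity of $\Theta_a^\sB$ and of the resolvent then yields $\Theta_a^\sB(f)=\cN_a(f)$; independence of the contours follows from Cauchy--Goursat in each variable separately. This route deliberately avoids any appeal to polynomial density or to nuclearity of $\sO(U)$.

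For openness of $\sA_U^n$ and joint continuity of $\cN$, I would rely on upper semicontinuity of the one-variable spectrum: each set $\{b\in\sA:\spec_\sA(b)\subset U_j\}$ is open in $\sA$, so $\sA_U^n$ is a product of open sets. For continuity near $(a^0,f^0)\in\sA_U^n\times\sO(U)$, I would fix contours $\gamma_j\subset U_j\setminus\overline{W_j}$ encircling $\spec_\sA(a_j^0)$; for $a$ in a small neighborhood of $a^0$ these same contours still encircle $\spec_\sA(a_j)$. The resolvent map $(a_j,z_j)\mapsto(z_j-a_j)^{-1}$ is jointly continuous with uniform norm bounds on a neighborhood of $\gamma_1\times\cdots\times\gamma_n$, and combined with locally uniform convergence of $f$ on compact subsets of $U$, standard integral estimates give joint continuity of $\cN$. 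The main obstacle lies in the middle step: arranging the Cauchy representation as a genuine Bochner integral in $\sO(W)$, ensuring the Riemann sum approximants themselves lie in $\sO(W)$, and then invoking the extension axiom factorwise so that $\Theta_a^\sB$ can be evaluated on the resolvent products without any global approximation theorem in several complex variables.
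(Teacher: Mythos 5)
Your proposal is correct and follows essentially the same route as the paper's proof: the spectrum inclusion via the resolvent hypothesis and \eqref{eq:RHF.2}, the identification $\Theta_a^\sB(f)=\cN_a(f)$ by recognizing Riemann sums of the contour integral as $\Theta_a^\sB$ applied to rational functions (your Cauchy/Bochner-integral packaging in $\sO(W)$ is just a more explicit phrasing of the paper's ``Riemann sums converge locally uniformly to $f$ near $\spec_\sB(a)$'' step), and the joint continuity via fixed contours and standard resolvent estimates. No gaps.
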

\begin{remark}
1. One might ask, why not, in light of the discussion at the
beginning of this Subsection \ref{ss.NHFC}, just take $\sB$
to be the Banach algebra $\tilde\sB$ \emph{generated} by $\alist n$
\emph{and} their resolvents. The point we make here, however,
is that the result will be the same in \emph{any}
commutative Banach algebra $\sB$ with $\tilde\sB\subset\sB\subset\sA$.

2. Our continuity statement \eqref{eq:NHFC.3} goes beyond the continuity
statement of Theorem \thmGamelin\ which is for a fixed tuple $a$ only. Also
notice, that for $a,a'\in \sA^n_U$ ($a$ and $a'$ might \emph{not} commute) the
corresponding algebras $\sB, \sB'$ might differ and hence the map $\cN$ cannot
easily be expressed in terms of the map $\Theta_\bullet^\bullet$ of Theorem
\thmGamelin. One should view the RHS of \eqref{eq:NHFC.3} as the
\emph{definition} of the map $\cN$. \eqref{eq:NHFC.2} then says that for given
$a$ and its corresponding algebra $\sB$ one has $\Theta_a^\sB(f) = \cN_a(f)$.

3. We emphasize that the algebra $\sA$ is not necessarily commutative and that
the map $\cN$ does not depend on any choices.  Furthermore, for fixed $a$ it
enjoys all the properties of the holomorphic functional calculus of Theorem
\thmGamelin\ and it is given by a simple contour integral. The obvious
disadvantage is that the na\"ive joint spectrum is not as small as possible and
that the na\"ive calculus is restricted to functions being holomorphic in a
neighborhood of the na\"ive joint spectrum.
\end{remark}

From now on we will wherever unambiguously possible write the more suggestive
$f(a)$ instead of $\cN_a(f)$ or $\Theta_a^\sB(f)$.

\begin{proof} First fix $a\in \sA^n$ such that $a=\avec n$ is an $n$-tuple of
commuting elements.

1.  Then for $b\in\{\alist n\}$ and $\gl\not \in\specA(b)$ one has by
construction of the algebra $\sB$ that $\gl\not\in\spec_\sB(b)$. Let $f(z):=
\gl - z$ and $g(z):= (\gl - z)\ii$. Then Theorem \thmGamelin, 1. implies
$\Theta_b^\sB(f) = \gl - b$ and since $f\cdot g = 1$ and since
$\Theta_b^\sB$ is a homomorphism of algebras, it also follows that
$\Theta_b^\sB(g) = (\gl - b)\ii$.

2. Next we show that the joint spectrum of $a$ in $\sB$, $\spec_{\sB}(a)$, is
contained in $\specn(a)$. Namely, let $\gl = \gvec \gl\not\in\specn(a)$. Then
for at least one $j\in\{1,\ldots,n\}$ we have $\gl_j\not\in\spec_\sB(a_j)$.
Then by 1. of this proof $g_j := (\gl_j - a_j)\ii\in\sB$ and hence
$(\gl_j-a_j)\cdot g_j = 1_\sB$, showing that $\gl\not\in\spec_\sB(a)$, cf.
\Eqref{eq:RHF.2}.

3. Still for fixed $a$ now choose $U_j$ and the integration cycles $\gamma_j$ as
in the statement of the theorem. The right hand side of \eqref{eq:NHFC.2} is a
limit of Riemann sums of the form
\[
  \sum f(\gamma_1(t_{k_1}),\ldots,\gamma_n(t_{k_n}))
  \prod_{j=1}^n (2\pi i)\ii\cdot (\gamma_j(t_{k_j}) - a_j)\ii
         \bl \gamma_j(t_{k_j}) - \gamma_j(t_{k_j-1}) \br.
\]
It follows from Theorem \thmGamelin\ and 1. of this proof that this Riemann sum
is $\Theta_a^\sB$ applied to the holomorphic (rational) function
\[
 z\mapsto  \sum f(\gamma_1(t_{k_1}),\ldots,\gamma_n(t_{k_n}))
  \prod_{j=1}^n (2\pi i)\ii\cdot (\gamma_j(t_{k_j}) - z_j)\ii
         \bl \gamma_j(t_{k_j}) - \gamma_j(t_{k_j-1}) \br.
\]
In a neighborhood of $\spec_\sB(a)$ the approximating Riemann sums therefore
converge uniformly to the holomorphic function $f$  and by the continuity statement
of Theorem \thmGamelin\ the equality \eqref{eq:NHFC.2} follows.

4. It remains to prove the continuity of the map $\cN$.  We first note that
clearly the set $\sA_U^n$ is an open subset of $\sA^n$.  So given $a\in\sA_U^n$
choose $U_j$ and $\gamma_j$ as above.  Furthermore, choose
$V_j\supset\spec(a_j)$ open with $\ovl{V_j}$ contained in the interior\footnote{
%FOOTNOTE
Of course, ``interior'' is meant here in the sense of integration cycles, \ie
the region encircled by the $\gamma_j$.}
%ENDFOOTNOTEof
of $\gamma_j$ and put $V:=\prod_j V_j$. Then $\sA_V^n$ is an open neighborhood of
$a$ and for $b\in\sA_V^n$ and $g,f\in\sO(U)$ we have the estimates
\begin{align}
  \|f(b)-g(a)\| &\le \|f(b)-f(a)\|+ \|f(a)-g(a)\|,  \\
  \|f(a)-g(a)\| &\le \|f-g\|_{\infty,\gamma}\cdot
\int_{\gamma_1\times\ldots\times \gamma_n} \Bigl\|\prod_j (z-a_j)\ii \Bigr\|
\,|\dsl z|,\label{DDNote-G2.4}\\
\|f(b)-f(a)\| &\le \|f\|_{\infty,\gamma} \cdot
\int_{\gamma_1\times\ldots\times \gamma_n} \bigl\|\prod_j (z-a_j)\ii - \prod_j
(z-b_j)\ii\bigr\| \,|\dsl z|,
\end{align}
where $\|\cdot\|_{\infty,\gamma}$ stands for the $\sup$ norm on the image of the
curves $\gamma_1,\ldots,\gamma_n$. These estimates imply the continuity of the
map in \Eqref{eq:NHFC.3}.
\end{proof}

We record here the (expected) behavior of the na\"ive calculus on rational
functions and on tensor products:

\begin{prop}\label{DDNote-P2.1}
Under the hypothesis of Theorem \ref{thm:NHFC}, let $\gamma_j, U_j$ be as above,
$j=1,\ldots,n$.  If $f_j\in\sO(U_j)$, $1\leq j \leq n$, then
\[
    (f_1\otidots f_n)(a) = f_1(a_1)\cldots f_n(a_n).
\]
In particular, if $p(z)= \MultiSum \ga n p_\ga z^\ga$ is a polynomial (or an
entire function) then
\[
  p(a) = \MultiSum \ga n p_\ga \cdot a_1^{\ga_1}\cldots a_n^{\ga_n} = \sum_{\alpha\in \N^n} p_\alpha \cdot a^\alpha.
\]
Moreover, for the resolvent $R_z(\gl):= \prod\limits_{j=1}^n (z_j-\gl_j)\ii$
we have $R_z(a)=\prod\limits_{j=1}^n (z_j-a_j)\ii$.
\end{prop}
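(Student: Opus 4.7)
The plan is to prove the tensor product formula first by direct manipulation of the contour integral \eqref{eq:NHFC.2}, and then deduce the polynomial, entire, and resolvent cases as formal consequences.

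Starting from the definition, I would write
\[
    (f_1\otimes\cdots\otimes f_n)(a)
     = \int_{\gamma_1}\cdots\int_{\gamma_n}
        f_1(z_1)\cdots f_n(z_n)\,\prod_{j=1}^n (z_j-a_j)^{-1}\,\dsl z_1\cdots \dsl z_n.
\]
The key point is that all the resolvents $(z_j-a_j)^{-1}$ lie in the commutative algebra $\sB$ constructed in Theorem \ref{thm:NHFC}, hence they commute with one another and with the $a_k$. Consequently the integrand factorizes as $\prod_j f_j(z_j)(z_j-a_j)^{-1}$ with factors that pairwise commute. The integrand is continuous on the compact set $\gamma_1\times\cdots\times\gamma_n$ with values in the Banach algebra $\sA$, so Fubini's theorem applies and yields
\[
    (f_1\otimes\cdots\otimes f_n)(a)
      = \prod_{j=1}^n \Bigl(\int_{\gamma_j} f_j(z_j)(z_j-a_j)^{-1}\,\dsl z_j\Bigr)
      = \prod_{j=1}^n f_j(a_j),
\]
where the last identity is just the one-variable Gelfand calculus applied in $\sB$ (or equivalently the one-variable case of Theorem \ref{thm:NHFC}).

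For the polynomial formula, each monomial $z^\alpha = z_1^{\alpha_1}\cdots z_n^{\alpha_n}$ is a pure tensor, so by the tensor product formula just proved together with linearity of $\cN_a$, one obtains $p(a) = \sum_\alpha p_\alpha a_1^{\alpha_1}\cdots a_n^{\alpha_n}$. For an entire $p$, the Taylor polynomials converge to $p$ locally uniformly, hence in $\sO(U)$, so the continuity of $\cN_a$ from Theorem \ref{thm:NHFC} upgrades the polynomial identity to the entire case. Finally, for fixed $z$ with $z_j\notin\spec_\sA(a_j)$ for every $j$, the rational function $R_z(\lambda) = \prod_j(z_j-\lambda_j)^{-1}$ is the pure tensor of the one-variable functions $\lambda_j\mapsto (z_j-\lambda_j)^{-1}$, each holomorphic on $U_j$, so the tensor product formula immediately gives $R_z(a) = \prod_j(z_j-a_j)^{-1}$.

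There is no real obstacle here: the only delicate points are the commutativity of the resolvents (which is automatic inside $\sB$) and the applicability of Fubini (which is automatic because the integrand is continuous on a compact product of rectifiable cycles with values in a Banach space). Everything else is bookkeeping built on the contour integral formula \eqref{eq:NHFC.2}.
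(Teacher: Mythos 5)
Your proof is correct and follows the same route as the paper, which simply declares the proposition ``immediate from \eqref{eq:NHFC.2}''; you have filled in exactly the details that remark leaves implicit (factorization of the integrand using commutativity of the resolvents, reduction to the one-variable Cauchy integral, then linearity and continuity for the polynomial, entire, and resolvent cases).
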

\begin{proof} This is immediate from \Eqref{eq:NHFC.2}
\end{proof}

%**********************************************************
%\subsection{Naive calculus coincides with ``official'' one} %{{{1
%\label{ss.NCofficial}
%**********************************************************

%##############################################################################
\section{Divided Differences as multivariate holomorphic functional calculus}
\label{s.DDmultifc}
%##############################################################################

We now establish the connection between divided differences and multivariate
holomorphic functional calculus. To facilitate this connection, a concise
overview of divided differences, including pertinent results necessary for our
discussion, is provided in Appendix \ref{s.Appendix}.

During this section $\sA$ denotes a unital Banach algebra.

%**********************************************************
\subsection{Divided Differences of commuting elements} %{{{2
%\label{DDNote-ss2.3}
\label{ss.DDcomutelements}
%**********************************************************

Let $U\subset\C$ be an open set and $f\in\sO(U)$. Then the $n$-th divided
difference
\[
  \DD^nf(z):=[z_0,\ldots,z_n]f
\]
is a holomorphic function of $z=(z_0,\ldots,z_n)\in U^{n+1}$.

Now let $a=(a_0,\ldots,a_n)\in\sA^{n+1}$ be commuting elements and let
$U\supset \cup_{j=0}^n \spec a_j$ be an open neighborhood of the union of the
spectra of $a_0,\ldots, a_n$. Furthermore, if $f\in\sO(U)$ then the $n$-th
divided difference $\DD^nf$ is holomorphic in $U^{n+1}$ which is an open
neighborhood of $\specn(a)$.  Thus
\[
  \DD^nf(a)=[a_0,\ldots,a_n]f\in\sA
\]
is well-defined by the na{\"i}ve holomorphic functional calculus established in
Section \ref{ss.NHFC}.

Choosing an integration cycle $\gG$ in $U$ which encircles
$\cup_{j=1}^n\spec(a_j)$ exactly once, we have in the interior,
$\operatorname{int}\gG$, of $\gG$ the integral representation
\[
  f(\cdot) = \int_\gG f(\gz) (\gz-\cdot)\ii \, \dsl \gz,
\]
where the integral converges in the Fr\'echet space $\sO(\operatorname{int}
\gG)$. Thus by the continuity statement of Theorem \ref{thm:NHFC}
\begin{equation} \label{DDNote-G2.6}
  \DD^nf =
  \int_\gG f(\gz) \cdot \DD^n\Bl(\gz-\cdot)\ii \Br\, \dsl \gz.
\end{equation}
In view of the continuity properties of the na\"ive holomorphic functional
calculus of Theorem \ref{thm:NHFC} and \Eqref{EqDDResolvent} we therefore find
\begin{equation} \label{DDNote-G2.7}
 \DD^n f (a) =  \DDots a_0,a_n; f
     =\int_\gG f(\gz)\cdot \prod_{j=0}^n (\gz-a_j)\ii\,\dsl \gz.
\end{equation}
Similarly, the Genocchi-Hermite formula, cf. \refDDGenocchi,
\begin{equation}\label{Eq-Genocchi-Hermite}
  \DDots a_0,a_n; f = \ISimplex n f^{(n)}\bl\sum_j s_j a_j\br \, ds
\end{equation}
holds for $a_0,\ldots,a_n\in\sA$ whenever $f$ is holomorphic in a neighborhood
of
\[
  \ovl{\bigcup_{s\in\Simplex_n} \spec \Bl \sum_j s_j a_j \Br},
\]
where $\Simplex_n$ denotes the $n$-standard simplex.  Also, if $f$ is
holomorphic in the ball $\bigsetdef{z\in\C}{|z|<r}$ and if $a_0,\ldots, a_n$ are
commuting elements in $\sA$ of norm less than $r$ then by Proposition
\ref{DDNote-P1.3} and Proposition \ref{DDNote-P2.1}
\[
  \DDots a_0,a_n; f = \MultiSum \ga {n+1}
  \frac{f^{(n+|\ga|)}(0)}{(|\ga|+n)!}\; a_0^{\ga_0}\cldots a_n^{\ga_n}
\]
with absolute convergence in norm.

%**********************************************************
\subsection{Divided differences of a general tuple } %{{{2
\label{ss.DDgeneralelementproj}
%**********************************************************

We now consider arbitrary not necessarily commuting elements
$a_0,a_1,\ldots\in \sA$. Recall from Section \ref{ss.notationMultiFuncCal} the
projective tensor product $\sAn_\pi$.

The multiplication map
\begin{equation} \label{DDNote-G2.8}
  \mu_n: \sAn\longrightarrow \sA,\; a_1\otidots a_n
  \mapsto a_1\cldots a_n
\end{equation}
extends by continuity to a bounded linear map from the projective tensor product
$\sAn_\pi$ into $\sA$.

For $a\in\sAn[n+1]_\pi$ and $b_1,\ldots, b_n\in\sA$ we write
\begin{equation} \label{DDNote-G2.9}
 a(b_1\cldots b_n) :=\mu_{n+1} \bl a\cdot(b_1\otidots b_n\oti \oneA)\br \in\sA.
\end{equation}
Note that
\begin{equation} \label{DDNote-G2.10}
  (a_0\otidots a_n)(b_1\cldots b_n)= a_0 \cdot b_1\cdot a_1\cldots b_n \cdot a_n.
\end{equation}
\Eqref{DDNote-G2.9} defines a continuous bilinear map
$\sAn[n+1]_\pi\times \sAn_\pi\to\sA$.  For $a\in\sA$ we introduce, \cf
\cite[Sec.~3.2]{Les2014}
\begin{equation}
  \begin{split} \label{DDNote-G2.11}
  a^{(j)} & := \oneA\otidots \oneA\oti a\oti \oneA\otidots \oneA, \;\; 0\le
  j\le n  \;\; \text{($a$ is in the $j$--th slot)}, \\
  \nabla_a^{(j)} &:= a^{(j-1)} - a^{(j)}, \quad 1\le j \le n.
  \end{split}
\end{equation}
Note that $a^{(j)}, \nabla_a\pup j;$ also depend on $n$, we suppress this from
the notation. Note that we use a different sign convention as \cite{Les2014}.
Our $\nabla_a$ here corresponds to $-\nabla_a$ in \cite{Les2014}.  For future
reference we note
\begin{equation} \begin{split}
   \nabla_a\pup j;+\ldots+\nabla_a\pup n;+a\pup n; &=a\pup j-1;\\
   a\pup0;-\nabla_a\pup 1;-\ldots-\nabla_a\pup j;  &=a\pup j;
     \end{split},\qquad 1\le j\le n.
   \label{DDNote-G2.12}
\end{equation}

\begin{example} We give an application. If $n=1$ then
$\nabla_a:=\nabla_a\pup 1;$ and the pairing with $\nabla_a$ gives the adjoint
action of $a$. Namely,
%SP LIT
\begin{equation} \label{DDNote-G2.12-1}
  \nabla_a(b):= (a\oti \oneA - \oneA\oti a)(b)
      = a\cdot b \cdot\oneA - \oneA\cdot b\cdot a = \ad_a(b),
\end{equation}
%E ND
and inductively
\begin{equation} \label{eq_adjoint}
    \nabla_a^n(b) = \ad_a^n(b).
\end{equation}
Note that $\nabla_a^n(b)$ on the left stands for the pairing
\Eqref{DDNote-G2.9}
between $\bl\nabla_a\pup 1;\br^n\in\sAn[2]_\pi$ and $b\in\sAn[1]_\pi$.

The pairing \Eqref{DDNote-G2.9} together with \Eqref{DDNote-G2.10}
is of some use for giving
proofs of combinatorial identities. E.g.
\[\begin{split}
    a^m b &= \bl a\pup 0;\br^m(b)=\bl\nabla_a + a\pup 1;\br ^m(b)
           = \sum_{j=0}^m {m\choose j} \Bl \nabla_a^j \bl a
              \pup 1;\br^{m-j}\Br (b)\\
          & = \sum_{j=0}^m {m\choose j} \cdot \ad_a^j(b)\cdot a^{m-j},
\end{split}\]
since $\nabla_a$ and $a\pup 1;$ commute.

This can be extended to the multinomial case. Namely, for arbitrary $n\in\N$
\[\begin{split}
    a^m\cdot b_1\cldots b_n
    &= \Bl \nabla_a\pup1;+\ldots+\nabla_a\pup n;+a\pup n;\Br^m(
         b_1\cldots b_n)\\
         &= \sum_{\substack{\alpha\in \N^{n}  \\ |\ga|\le m}} \frac{m!}{\ga! (m-|\ga|)!}\;
           \ad_a^{\ga_1}(b_1)\cldots
           \ad_a^{\ga_n}(b_n) \cdot a^{m-|\ga|}. \qed
           \end{split}\]
\end{example}

\subsubsection{Divided differences of a general tuple} %{{{2

After these preparations we are now able to define the divided difference of a
general tuple as an element of the projective tensor product.

For arbitrary $a_0,\ldots,a_n\in\sA$ the elements $a_i\pup
i;\in\sAn[n+1]_\pi, 0\le i\le n$, commute. By slight abuse
of notation we put
\begin{equation}
  \begin{split} \label{DDNote-G2.13}
\DDotsg a_0,a_n; f&:=
  \DDots {a_0\pup 0;},{a_n\pup n;}; f\in\sAn[n+1]_\pi\\
    &= \int_\gG f(\gz)\cdot (\gz-a_0)\ii\oti\ldots \oti (\gz-a_n)\ii
  \,\dsl\gz.
  \end{split}
\end{equation}
Thus, for $a_0,\ldots,a_n\in\sA, b_1,\ldots, b_n\in\sA$ and
appropriate $f\in\sO(U)$ (see the beginning of this section ) one obtains, in view of \Eqref{DDNote-G2.7},
\begin{equation}
  \begin{split}
\DDotsg a_0,a_n; &f(b_1\cldots b_n)\\
    &=
\int_\gG f(\gz) \cdot (\gz-a_0)\ii \cdot b_1\cdot (\gz-a_1)\ii\cldots b_n\cdot
   (\gz-a_n)\ii \,\dsl\gz,
   \label{DDNote-G2.14}
  \end{split}
\end{equation}
where this integral is well defined by Theorem \ref{thm:NHFC}.

%##############################################################################
\section{Applications}
\label{s.Applications}
%##############################################################################

We now direct our focus towards demonstrating the practical implications of the
constructions developed earlier by presenting a variety of applications. As in the previous section, upper case script letters $\sA, \sB,\ldots$ denote
\emph{unital} Banach algebras.

%%**********************************************************
%\subsection{Notation}
%\label{ss.notation.applications}
%%**********************************************************

%**********************************************************
\subsection{The \nonc\  Newton interpolation formula} %{{{2
\label{ss.noncommutativeNewton}
%**********************************************************

\begin{prop} \label{DDNote-P2.3} Let $\sA$ be a unital Banach algebra and let
$f\in\sO(U)$ be a holomorphic function in the open set $U\subset \C$. Then for
$a_0,\ldots, a_n\in\sA$ with $\spec(a_j)\subset U, 0\le j\le n,$ we have
\[
 f(a_n) =f(a_0)+
    \sum_{j=1}^n \bl \DDotsg a_0,a_j; f\br \bl (a_n-a_0)\cldots
    (a_n-a_{j-1})\br.
\]
\end{prop}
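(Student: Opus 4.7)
My plan is to reduce the statement to an algebraic identity among resolvents by invoking the contour integral representation \eqref{DDNote-G2.14}. Choosing a cycle $\Gamma\subset U$ encircling each $\spec(a_j)$, $0\le j\le n$, exactly once, that formula writes the $j$-th summand on the right hand side as
\[
 \int_\Gamma f(\zeta)\, P_j(\zeta)\,\dsl\zeta, \qquad
 P_j(\zeta):=(\zeta-a_0)\ii(a_n-a_0)(\zeta-a_1)\ii(a_n-a_1)\cdots(a_n-a_{j-1})(\zeta-a_j)\ii.
\]
Since the one-variable holomorphic functional calculus yields $f(a_n)-f(a_0)=\int_\Gamma f(\zeta)\bl (\zeta-a_n)\ii-(\zeta-a_0)\ii\br\dsl\zeta$, it suffices to establish the purely algebraic telescoping
\[
 \sum_{j=1}^n P_j(\zeta)=(\zeta-a_n)\ii-(\zeta-a_0)\ii
\]
for every $\zeta\in\Gamma$ (in fact for any $\zeta\notin\bigcup_j\spec(a_j)$). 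Integrating against $f(\zeta)\dsl\zeta$ then immediately delivers the claim.

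I would prove this identity by induction on $n$. For $n=1$, the rewriting $a_1-a_0=(\zeta-a_0)-(\zeta-a_1)$ gives $P_1(\zeta)=(\zeta-a_1)\ii-(\zeta-a_0)\ii$ directly. For the inductive step, factor $(\zeta-a_0)\ii(a_n-a_0)$ to the left from every $P_j$ with $j\ge 1$; what remains inside the sum is the term $(\zeta-a_1)\ii$ (coming from $j=1$) together with an expression of exactly the same shape as $\sum_{k=1}^{n-1}P_k$ but formed from the shifted tuple $(a_1,\ldots,a_n)$, keeping the same ``last element'' $a_n$. By the induction hypothesis applied to this shifted sum of length $n-1$, its value is $(\zeta-a_n)\ii-(\zeta-a_1)\ii$, so the inner bracket collapses to $(\zeta-a_n)\ii$. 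One final application of the same resolvent trick used in the base case,
\[
 (\zeta-a_0)\ii(a_n-a_0)(\zeta-a_n)\ii=(\zeta-a_n)\ii-(\zeta-a_0)\ii,
\]
closes the induction.

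The only delicate point is notational bookkeeping: the factors in $P_j$ do not commute, so it matters crucially that $(\zeta-a_0)\ii(a_n-a_0)$ is factored on the \emph{left}, that the shifted sum retains $a_n$ as its rightmost label, and that the reindexing $j\mapsto j-1$ is interpreted correctly. Beyond that there is no genuine obstacle; once the integral representation of Theorem \ref{thm:NHFC} is in hand, the noncommutative Newton formula reveals itself as a telescoping identity among resolvents, with the holomorphic functional calculus doing all the analytic work. One small sanity check I would also perform separately is that the pairing in \eqref{DDNote-G2.14} is continuous in the pairing arguments, so that the interchange of sum and integral presents no issue, but this is immediate from \eqref{DDNote-G2.9}.
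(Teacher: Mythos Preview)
Your argument is correct. Both your proof and the paper's rest on the same resolvent identity $(\zeta-x)^{-1}(y-x)(\zeta-y)^{-1}=(\zeta-y)^{-1}-(\zeta-x)^{-1}$ and both proceed by induction, but the organization differs. The paper first isolates a recursion for divided differences (Lemma \ref{DDNote-L2.4}),
\[
[a_0,\ldots,a_{n-1},a_{n+1}]_\pi f - [a_0,\ldots,a_n]_\pi f
  = [a_0,\ldots,a_{n+1}]_\pi f\bigl(\cdots (a_{n+1}-a_n)\bigr),
\]
and then runs the induction at the level of the Newton formula itself, passing from the tuple $(a_0,\ldots,a_{n-1},a_{n+1})$ to $(a_0,\ldots,a_{n+1})$ by adjoining $a_n$ at the end. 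You instead unpack every term into its contour integral via \eqref{DDNote-G2.14} at the outset and prove a pointwise telescoping identity $\sum_j P_j(\zeta)=(\zeta-a_n)^{-1}-(\zeta-a_0)^{-1}$ among the integrands, with the induction stripping off $a_0$ from the front. Your route is a bit more direct and avoids stating the auxiliary lemma; the paper's route has the minor advantage of recording Lemma \ref{DDNote-L2.4} as a stand-alone recursion that mirrors the classical scalar identity \eqref{EQDD1}.
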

This is a \nonc\  analogue of Newton's interpolation formula.  We emphasize that
the $a_0,\ldots,a_n$ do not necessarily commute.  Therefore, the order of the
entries on the right matters.

For the proof we first establish a recursion formula for \nonc\  divided
differences.

\begin{lemma} \label{DDNote-L2.4} Under the assumptions of Proposition
\ref{DDNote-P2.3} we have
\[\begin{split}
\DDotsg a_0,{a_{n-1},a_{n+1}}; &f(b_1\cldots b_n) -
    \DDotsg a_0,a_n; f(b_1\cldots b_n) \\
  &=
\DDotsg a_0,{a_{n},a_{n+1}}; f\bl b_1\cldots b_n\cdot(a_{n+1}-a_n)\br .
\end{split}\]
\end{lemma}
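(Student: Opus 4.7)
The plan is to prove this by an application of the resolvent identity to the contour-integral representation \eqref{DDNote-G2.14}. First I would fix an integration cycle $\gamma$ in an open $U \supset \bigcup_{j=0}^{n+1} \spec(a_j)$ that encircles every $\spec(a_j)$ exactly once, so that $\gamma$ works simultaneously for all three divided differences appearing in the lemma.

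Using \eqref{DDNote-G2.14}, each of the two terms on the left-hand side is a single contour integral whose integrand only differs in the last resolvent factor: the first term carries $(\zeta-a_{n+1})^{-1}$ while the second carries $(\zeta-a_n)^{-1}$. I would combine them into one integral and pull out the common prefix
\[
f(\zeta)(\zeta-a_0)^{-1}b_1(\zeta-a_1)^{-1}\cldots b_n,
\]
reducing the LHS to
\[
\int_\gamma f(\zeta)(\zeta-a_0)^{-1}b_1\cldots b_n\bl(\zeta-a_{n+1})^{-1}-(\zeta-a_n)^{-1}\br\,\dsl\zeta .
\]
The heart of the argument is now the elementary resolvent identity
\[
(\zeta-a_{n+1})^{-1}-(\zeta-a_n)^{-1}=(\zeta-a_n)^{-1}(a_{n+1}-a_n)(\zeta-a_{n+1})^{-1},
\]
which when substituted inserts the two missing resolvent factors and the factor $(a_{n+1}-a_n)$ in exactly the right place. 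Comparing with \eqref{DDNote-G2.14} applied to the $(n+2)$-tuple $(a_0,\ldots,a_n,a_{n+1})$ paired with $b_1,\ldots,b_n,(a_{n+1}-a_n)$, one recognizes this as $\DDotsg a_0,{a_n,a_{n+1}};f\bl b_1\cldots b_n\cdot(a_{n+1}-a_n)\br$.

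The only care required is to justify these manipulations in $\sA$ rather than formally: the pairing $\mu_{n+1}$ with $(b_1\otidots b_n\oti \oneA)$ is continuous on $\sAn[n+1]_\pi$, so it commutes with the contour integral used to define $\DDotsg a_0,\ldots;f$. Consequently the two representations above are bona fide Bochner-type integrals in $\sA$, and the algebraic identity can be applied pointwise in $\zeta$. I do not anticipate any real obstacle beyond this bookkeeping; essentially the statement is just the resolvent identity in disguise, which reflects the recursion $[z_0,\ldots,z_{n-1},z_{n+1}]f - [z_0,\ldots,z_n]f = (z_{n+1}-z_n)[z_0,\ldots,z_n,z_{n+1}]f$ for scalar divided differences from Appendix~\ref{s.Appendix}, with the scalar factor $(z_{n+1}-z_n)$ replaced in the noncommutative setting by the pairing insertion $(a_{n+1}-a_n)$ in the last slot.
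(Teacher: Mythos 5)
Your proposal is correct and is essentially the paper's own proof: the authors likewise reduce the identity to the resolvent equation $(\gz-a_{n+1})^{-1}-(\gz-a_n)^{-1}=(\gz-a_n)^{-1}(a_{n+1}-a_n)(\gz-a_{n+1})^{-1}$ applied inside the contour-integral representation \eqref{DDNote-G2.14}, the only cosmetic difference being that they state the cancellation at the level of the paired divided differences of $(\gz-\cdot)^{-1}$ before integrating, whereas you manipulate the integrand directly. Your remark about the continuity of the pairing justifying the interchange with the contour integral is the same bookkeeping the paper leaves implicit.
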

\begin{proof} This is a consequence of the resolvent equation. With regard to
\Eqref{DDNote-G2.6}, \eqref{DDNote-G2.13}, \eqref{DDNote-G2.14}, and
\eqref{EqDDResolvent} we note that
\begin{multline*}
\DDotsg a_0,{a_{n-1},a_{n+1}}; (\gz-\cdot)\ii (b_1\cldots b_n) -
              \DDotsg a_0,a_n; (\gz-\cdot)\ii (b_1\cldots b_n) \\
    =  (\gz-a_0)\ii\cdot b_1\cldots (\gz-a_{n-1})\ii\cdot b_n
     \cdot \Bigl\{ (\gz-a_{n+1})\ii - (\gz - a_n ) \ii \Bigr\}.
\end{multline*}
By the resolvent equation the expression in curly braces equals
\[
   (\gz - a_n)\ii\cdot (a_{n+1}-a_n)\cdot (\gz -a_{n+1})\ii.
\]
Integration over the contour in \Eqref{DDNote-G2.14} then proves the claim.
\end{proof}

\begin{proof}[Proof of Proposition \plref{DDNote-P2.3}]
We proceed by induction on $n$. For $n=0$ the claim is obvious.  So assume it
holds for $n$. To prove it for $n+1$ we apply it to $a_0,\ldots,a_{n-1},a_{n+1}$
and find
\begin{align*}
 f(a_{n+1}) & = f(a_0)+
 \sum_{j=1}^{n-1} \bl \DDotsg a_0,a_j; f\br \bl (a_{n+1}-a_0)\cldots
 (a_{n+1}-a_{j-1})\br \\
 & \quad + \bl[a_0,\ldots,a_{n-1},a_{n+1}]_\pi f\br
     \bl (a_{n+1}-a_0)\cldots (a_{n+1}-a_{n-1}) \br.
\end{align*}
By the previous Lemma the last summand equals
\begin{multline*}
\bl[a_0,\ldots,a_{n}]_\pi f\br
     \bl (a_{n+1}-a_0)\cldots (a_{n+1}-a_{n-1})\br \\
+
\bl[a_0,\ldots,a_{n+1}]_\pi f\br
     \bl (a_{n+1}-a_0)\cldots (a_{n+1}-a_{n}) \br
\end{multline*}
which are exactly the two missing summands to complete the
claimed formula for $n+1$.
\end{proof}

%**********************************************************
\subsection{Noncommutative Taylor formulas} %{{{2
\label{ss.noncommutativeTaylor}
%**********************************************************

\begin{prop} \label{DDNote-P2.6.1} Let $\sA$ be a unital Banach algebra and let
$f\in\sO(U)$ be a holomorphic function in the open set $U\subset \C$.  Denote by
$F:\sA_U\to\sA, a\mapsto f(a)$ the map induced by $f$ via the na\"ive
holomorphic functional calculus Theorem \ref{thm:NHFC}. Then $F$ is holomorphic.
The Taylor formula for $F$ about $a\in\sA_U$ reads
\begin{equation} \label{DDNote-G2.6.1}
  \begin{split}
    f(a+b)  = &\sum_{j=0}^N \DDots {a\pup 0;},{a\pup j;}; f \bl b\cldots b\br\\
&\qquad+ \DDots {a\pup 0;}, { {a\pup N;}, (a+b)\pup N+1;};f \bl b\cldots b\br.
 \end{split}
\end{equation}
The remainder $R_N(a,b):=
\DDots {a\pup 0;}, { {a\pup N;}, (a+b)\pup N+1;};f \bl b\cldots b\br$
converges to $0$ as
$N\to\infty$ for $b$ small enough. Hence we have the series expansion
\begin{equation} \label{DDNote-G2.6.2}
  f(a+b)  = \sum_{j=0}^\infty \DDots {a\pup 0;},{a\pup j;}; f \bl b\cldots b \br
\end{equation}
in a neighborhood of $a\in\sA_U$.

Therefore, the $n$--the derivative of $F$ at $a\in\sA_U$ is given by
\begin{equation} \label{DDNote-G2.6.3}
  D^nF(a)[b_1,\ldots,b_n]=
\sum_{\sigma\in S_n} \DDots {a\pup0;},{a\pup n;};f\bl
    b_{\sigma_1}\cldots b_{\sigma_n}\br.
\end{equation}
Here the sum is taken over the set $S_n$ of all $n$--permutations.
\end{prop}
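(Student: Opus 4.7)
My plan is to derive the finite Taylor formula \eqref{DDNote-G2.6.1} as a direct specialization of the noncommutative Newton interpolation formula (Proposition \ref{DDNote-P2.3}) to the tuple $(a_0, a_1, \ldots, a_{N+1}) := (a, a, \ldots, a, a+b)$, in which $a$ appears $N+1$ times at the start and $a+b$ appears once at the end. Under this substitution every Newton factor $a_{N+1} - a_{j-1}$ collapses to $b$; each divided difference $[a_0, \ldots, a_j]_\pi f$ for $1 \le j \le N$ becomes the coincident-node expression $[a^{(0)}, \ldots, a^{(j)}]f$; and the last term in Newton's sum is precisely the remainder $R_N(a,b)$ defined in the statement. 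So this first step is an essentially algebraic bookkeeping exercise.

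The second step is to control the remainder. Writing the coincident-node divided difference via the contour integral \eqref{DDNote-G2.14}, I get
\begin{equation*}
R_N(a,b) = \int_\Gamma f(\zeta) \bigl((\zeta - a)^{-1} b\bigr)^{N+1} (\zeta - a - b)^{-1} \, \frac{d\zeta}{2\pi i},
\end{equation*}
where $\Gamma$ is an integration cycle inside $U$ encircling $\spec(a)$. Setting $M := \sup_{\zeta \in \Gamma} \|(\zeta - a)^{-1}\|$ and restricting to $\|b\| < 1/M$, the spectrum of $a+b$ remains inside $\Gamma$ by upper semicontinuity of the spectrum, so $(\zeta - a - b)^{-1}$ is uniformly bounded on $\Gamma$ and I obtain a geometric estimate $\|R_N(a,b)\| \le C (M\|b\|)^{N+1}$. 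This gives $R_N(a,b) \to 0$ as $N \to \infty$, establishes the series expansion \eqref{DDNote-G2.6.2}, and exhibits $F$ as a norm-convergent power series in $b$ near each $a \in \sA_U$, hence holomorphic.

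For the derivative formula \eqref{DDNote-G2.6.3}, my plan is to polarize. Fix $n$ and $b_1, \ldots, b_n \in \sA$, and substitute $b = t_1 b_1 + \cdots + t_n b_n$ with $t \in \C^n$ small into \eqref{DDNote-G2.6.2}. The left-hand side $F(a + \sum_i t_i b_i)$ is then holomorphic in $t$, and by the standard characterization of derivatives of holomorphic maps between Banach spaces its coefficient of $t_1 t_2 \cdots t_n$ equals $D^n F(a)[b_1, \ldots, b_n]$. On the right, only the $j = n$ summand of \eqref{DDNote-G2.6.2} can contribute a monomial of multidegree $(1, 1, \ldots, 1)$; by multilinearity of the pairing \eqref{DDNote-G2.9} in the $b_i$-slots, the coefficient of $t_1 \cdots t_n$ in $[a^{(0)}, \ldots, a^{(n)}]f(\sum_i t_i b_i, \ldots, \sum_i t_i b_i)$ is precisely $\sum_{\sigma \in S_n} [a^{(0)}, \ldots, a^{(n)}]f(b_{\sigma(1)}, \ldots, b_{\sigma(n)})$. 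Matching coefficients yields \eqref{DDNote-G2.6.3}.

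The main obstacle I foresee is the uniformity of the remainder estimate: one needs the cycle $\Gamma$ and the constant $M$ to be chosen so that the bound on $R_N$ holds uniformly on a neighborhood of $a$ in $\sA_U$, which is what makes the coefficient extraction in the polarization step legitimate. This is standard given norm-continuity of the resolvent, but it is the only analytic subtlety. The polarization argument itself is routine once absolute convergence of \eqref{DDNote-G2.6.2} on a polydisc in the $t_i$'s is in hand.
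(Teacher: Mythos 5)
Your proposal is correct and follows essentially the same route as the paper: specialize the noncommutative Newton interpolation formula to $a_0=\cdots=a_N=a$, $a_{N+1}=a+b$, bound the remainder through the same contour-integral representation by a geometric factor $(C\|b\|)^{N+1}$, and then read off the $n$-th derivative. The only cosmetic difference is that you extract the coefficient of $t_1\cdots t_n$ after substituting $b=\sum_i t_i b_i$ (polarization), whereas the paper takes the mixed partials $\partial_{s_1}\cdots\partial_{s_n}\big|_{s=0}$ of the finite Taylor formula; for the holomorphic map $F$ these are equivalent.
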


\begin{example}
We note that this indeed contains the ordinary Taylor formula as a special case.
Namely, if $\sA = \C$, then $\sAn = \C$ and hence for $a,b\in \C$
\begin{equation*}
	\DDots {a\pup0;},{a\pup n;};f ( b\cldots b ) = \frac{1}{n!} f^{(n)}(a)\cdot b^n.
\end{equation*}
\end{example}

\begin{proof} \Eqref{DDNote-G2.6.1} is a consequence of the \nonc\ Newton
interpolation formula Proposition \ref{DDNote-P2.3} applied to
$a_0=\ldots=a_N=a, a_{N+1}=a+b$. To estimate the remainder $R_N(a,b)$ we write
using \Eqref{DDNote-G2.14} and a suitable contour $\Gamma$
\[
  R_N(a,b)= \int_\Gamma f(\gz) \cdot \bl (\gz-a)\ii b\br^{N+1}\cdot (\gz-a-b)\ii
  \, \dsl \gz,
\]
thus for $\|b\|$ small enough
\[
  \| R_N(a,b)\|\le C_1 \bl C_2 \|b\|\br^{N+1} \longrightarrow 0,
\]
if $C_2\|b\|< 1$. This proves \Eqref{DDNote-G2.6.2}.

To prove \Eqref{DDNote-G2.6.3} we note that the $n$-th derivative on the left
hand side of \Eqref{DDNote-G2.6.1} is given by
\[
  D^n F(a)[b_1,\ldots,b_n]= \pl_{s_1}\cldots
  \pl_{s_n}\big|_{s_1=\ldots=s_n=0} f(a+\sum_j s_j b_j).
\]
With regard to \Eqref{DDNote-G2.6.1}, using induction, this indeed equals the
right hand side of \Eqref{DDNote-G2.6.3}.
\end{proof}

%%%**********************************************************
%%\subsubsection{Closed formulas} %{{{3
%%%I don't understand why this subsection, because I just organized. I left here,
%%%see if we will need in the future.
%%%**********************************************************
In view of \Eqref{DDNote-G2.12} and \Eqref{DDNote-G1.12} we may rewrite the
general term in \Eqref{DDNote-G2.6.1} resp.  \Eqref{DDNote-G2.6.2}  as follows
\begin{equation}
  \begin{split} \label{DDNote-G2.6.4}
[{a\pup 0;},\ldots,{a\pup n;}] f &=  [{a\pup 0;},{a\pup 0;}-\nabla_a\pup 1;,\ldots,
             a\pup0;-\nabla_a\pup1;-\ldots-\nabla_a\pup n;]_\pi f\\
            = & \sum_{\ga\in\N^n} \frac{(-1)^{|\ga|}
           f^{(n+|\ga|)}(a\pup 0;)}{\ga?!} \nabla_a^\ga,
  \end{split}
\end{equation}
with $\nabla_a^\ga:=(\nabla_a\pup 1;)^{\ga_1}\cldots (\nabla_a\pup n;)^{\ga_n}$,
thus using \Eqref{eq_adjoint}
\begin{equation}
  \begin{split} \label{DDNote-G2.6.5}
[{a\pup 0;},&\ldots,{a\pup n;}] f \bl b_1\cldots b_n\br\\
             &=  \sum_{\ga\in\N^n} \frac{(-1)^{|\ga|}\cdot
           f^{(n+|\ga|)}(a)\cdot \ad_a^{\ga_1}(b_1)\cldots
                    \ad_a^{\ga_n}(b_n)}{\ga?!} .
  \end{split}
\end{equation}
For the notation $\alpha ?!$ see \eqref{DDNote-G1.2}.  Analogously,
\begin{equation}
  \begin{split} \label{DDNote-G2.6.6}
[{a\pup 0;},&\ldots,{a\pup n;}] f \\
    &=  [{a\pup n;}+\nabla_a\pup1;+\ldots+\nabla_a\pup n;,
     {a\pup n;}+\nabla_a\pup2;+\ldots+\nabla_a\pup n;,
      \ldots, a\pup n;] f\\
      &=  \sum_{\ga\in\N^n} \frac{f^{(n+|\ga|)}(a\pup n;)}{\ga!?} \nabla_a^\ga,
  \end{split}
\end{equation}
thus
\begin{equation}
  \begin{split} \label{DDNote-G2.6.7}
[{a\pup 0;},&\ldots,{a\pup n;}] f \bl b_1\cldots b_n\br\\
             &=  \sum_{\ga\in\N^n} \frac{
                    \ad_a^{\ga_1}(b_1)\cldots
                  \ad_a^{\ga_n}(b_n)\cdot f^{(n+|\ga|)}(a)}{\ga!\cdot (\ga_1+1)\cdot
                  (\ga_1+\ga_2+2)\cldots (|\ga|+n)} .
  \end{split}
\end{equation}
Summing up, for $f$ as in Proposition \ref{DDNote-P2.6.1} we obtain:

\begin{prop}\label{prop-nonc-Taylor-Expansions}
Under the conditions of Proposition \ref{DDNote-P2.6.1}, the \nonc\  Taylor
expansions are given by
\begin{align}
  f(a+b) & =
  \sum_{n=0}^\infty
  \sum_{\ga\in\N^n} \frac{(-1)^{|\ga|} \cdot f^{(n+|\ga|)}(a)\cdot
                  \ad_a^{\ga_1}(b)\cldots
                  \ad_a^{\ga_n}(b) }{%
                  \ga!\cdot (\ga_n+1)\cdot
                (\ga_{n}+\ga_{n-1}+2)\cldots (|\ga|+n)}
                \label{DDNote-G2.6.8}  \\
        & =
  \sum_{n=0}^\infty
  \sum_{\ga\in\N^n} \frac{
                    \ad_a^{\ga_1}(b)\cldots
                \ad_a^{\ga_n}(b)\cdot f^{(n+|\ga|)}(a)}{\ga!\cdot (\ga_1+1)\cdot
                  (\ga_1+\ga_2+2)\cldots (|\ga|+n)}
               .\label{DDNote-G2.6.9}
\end{align}
\end{prop}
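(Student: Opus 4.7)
The plan is to derive both formulas as immediate corollaries of Proposition \ref{DDNote-P2.6.1} combined with the alternative expansions \eqref{DDNote-G2.6.5} and \eqref{DDNote-G2.6.7} of the divided difference $[a^{(0)},\ldots,a^{(n)}]f(b_1\cdots b_n)$, which were in turn derived from the Appendix identities \eqref{DDNote-G2.6.4} and \eqref{DDNote-G2.6.6}. All of the analytic content has therefore already been established; the proposition is essentially bookkeeping.

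Concretely, I would specialize \eqref{DDNote-G2.6.5} to $b_1=\cdots=b_n=b$ and insert it into the series \eqref{DDNote-G2.6.2} from Proposition \ref{DDNote-P2.6.1}. Unpacking the factorial notation $\ga?!$ (from the Appendix) as $\ga!\cdot(\ga_n+1)(\ga_n+\ga_{n-1}+2)\cdots(|\ga|+n)$ converts the resulting double sum into \eqref{DDNote-G2.6.8}. An entirely analogous argument using \eqref{DDNote-G2.6.7} and the dual notation $\ga!?= \ga!\cdot(\ga_1+1)(\ga_1+\ga_2+2)\cdots(|\ga|+n)$ yields \eqref{DDNote-G2.6.9}; the only difference is that $f^{(n+|\ga|)}(a)$ now appears on the right rather than on the left, reflecting the expansion being about $a^{(n)}$ rather than $a^{(0)}$.

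The one genuine point requiring verification is absolute convergence of the double series in a neighborhood of $b=0$, so that the implicit interchange of summation is legitimate. The remainder estimate $\|R_N(a,b)\| \le C_1(C_2\|b\|)^{N+1}$ from the proof of Proposition \ref{DDNote-P2.6.1} supplies geometric decay in the outer variable $n$. For the inner variable $\ga$, the contour integral representation \eqref{DDNote-G2.14} together with the bound $\|\ad_a^k(b)\| \le (2\|a\|)^k\|b\|$ controls each term by a summable factor over $\ga\in\N^n$, provided $\|b\|$ is small enough. Fubini for absolutely summable series then permits the rearrangement, and I do not anticipate any conceptual obstacle.
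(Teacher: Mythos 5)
Your proposal follows the paper's own route exactly: the paper obtains the proposition precisely by substituting the expansions \eqref{DDNote-G2.6.5} and \eqref{DDNote-G2.6.7} (specialized to $b_1=\cdots=b_n=b$) into the series \eqref{DDNote-G2.6.2} of Proposition \ref{DDNote-P2.6.1} and unpacking the notations $\ga?!$ and $\ga!?$. Your added remarks on absolute convergence and the legitimacy of the rearrangement go slightly beyond what the paper records explicitly, but they are consistent with its argument.
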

In the context of formal power series these formulas were proved
with a slightly different method in \cite[Sec.~1]{Pay2011}.
Note that in contrast to the abstract
of loc. cit. $f^{(n+|\ga|)}$ is on the right in \Eqref{DDNote-G2.6.9}.
This is consistent, however, with \cite[Theorem 1]{Pay2011}.

%\clearpage
%**********************************************************
\subsection{Exponential and Magnus' Theorem} %{{{2
\label{ss.expMagnus}
%**********************************************************

\subsubsection{The Exponential} %{{{3
We apply our previous considerations to the exponential function. The power
series expansion of the divided differences (Proposition \ref{DDNote-P1.3} and
Corollary \ref{DDNote-C1.4}) of the exponential function then read
\begin{align*}
\DDots x_0, x_n; \exp
& = \sum_{\ga\in\N^{n+1}} \mlfrac x^\ga/(|\ga|+n)!; ,\\
\DDots {a, a+x_1}, a+x_n; \exp  & =\, e^a \cdot\DDots {0,x_1}, x_n; \exp \\
&= \sum_{\ga\in\N^n} \mlfrac e^a / (|\ga|+n)!; \, x^\ga, \\
\DDots {a, a+x_1, a+x_1+x_2}, a+x_1+\ldots+x_n; \exp  & =\,
\sum_{\ga\in\N^n} \mlfrac e^a / \ga?!; \, x^\ga.
\end{align*}
These expansions converge for all complex numbers $x_0,\ldots, x_n, a\in\C$.

For a unital Banach algebra $\sA$ and $a, b_1,\ldots,b_n\in\sA$, Proposition
\ref{DDNote-P2.6.1} specializes to the well-known Dyson expansion for the
exponential function (cf. \cite{Dys1949}). Namely, involving the Genochi-Hermite
formula \Eqref{Eq-Genocchi-Hermite}, \Eqref{EQHermite} and Proposition
\ref{DDNote-P2.6.1} we find
\begin{equation*}
	\begin{aligned}
		e^{a+b} &= e^a + \sum_{n=1}^{\infty} \DDots{a\pup 0;},{a\pup n;}; \exp (b\cdot \ldots \cdot b)\\
		&=e^a + \sum_{n=1}^\infty \int_{\Delta_n} \exp \Bl \sum_{j=0}^{n} s_j a^{(j)}\Br (b\cdot \ldots \cdot b)ds\\
		&=e^a + \sum_{n=1}^\infty \int_{\Delta_n} e^{s_0 a}\otimes \ldots \otimes e^{s_n a} (b\cdot \ldots \cdot b)ds\\
		&=e^a + \sum_{n=1}^\infty \int_{\Delta_n} e^{s_0 a}\cdot b \cdot e^{s_1 a} \cdot  \ldots \cdot b \cdot e^{s_n a} ds,\\
	\end{aligned}
\end{equation*}
respectively, the finite Taylor formula \eqref{DDNote-G2.6.1} gives  analogously,
\begin{equation*}
	\begin{aligned}
		e^{a+b} &=e^a + \sum_{n=1}^N \int_{\Delta_n} e^{s_0 a}\cdot b \cdot e^{s_1 a} \cdot  \ldots \cdot b \cdot e^{s_n a} ds\\
		&+\int_{\Delta_{N+1}} e^{s_0 a}\cdot b \cdot  \ldots \cdot e^{s_N a}\cdot b \cdot e^{s_{N+1} (a+b)} ds.
	\end{aligned}
\end{equation*}
%\clearpage
Furthermore, the \nonc\ Taylor expansions, Proposition
\ref{prop-nonc-Taylor-Expansions}, give for the $\sA$--valued divided difference
(\Eqref{DDNote-G2.6.5} and \eqref{DDNote-G2.6.7})
\begin{equation}\label{eq-4.10}
\begin{split}
[{a\pup 0;},&\ldots,{a\pup n;}] \exp \bl b_1\cldots b_n\br\\
&=  \sum_{\ga\in\N^n} \frac{(-1)^{|\ga|} \cdot e^a\cdot \ad_a^{\ga_1}(b_1)\cldots \ad_a^{\ga_n}(b_n) }{\ga?!} \;\\
&=  \sum_{\ga\in\N^n} \frac{%
	\ad_a^{\ga_1}(b_1)\cldots\ad_a^{\ga_n}(b_n) \cdot e^a}{\ga!?} \;,
\end{split}
\end{equation}
where the right hand side is absolutely convergent.  In the context of formal
power series this formula was proved in \cite[Theorem 2]{Pay2011}.

In sum, the \nonc\  Newton and Taylor expansions (Proposition \ref{DDNote-P2.3},
\Eqref{DDNote-G2.6.8} and \eqref{DDNote-G2.6.9}) now lead to the following
Peano-Baker and Cambell-Baker-Hausdorff type formulas
(cf. \cite[Proposition 2]{Pay2011}).
\begin{align*}
e^{a+b}  = \, &e^a+ \sum_{j=1}^N \int_{\Simplex_j}
e^{s_0 a} \cdot b\cldots b\cdot e^{s_j a} ds \\
& \quad + \int_{\Simplex_{N+1}}
e^{s_0 a} \cdot b\cldots b\cdot e^{s_N a}\cdot b\cdot e^{s_{N+1}
(a+b)} ds \\
=\,& \sum_{n=0}^\infty \sum_{\ga\in\N^n}
\mlfrac  (-1)^{|\ga|}\cdot e^a\cdot \ad_a^\ga(b) / \ga!?;
=\, \sum_{n=0}^\infty \sum_{\ga\in\N^n}
\mlfrac  \ad_a^\ga(b)\cdot  e^a / \ga?!;,
\end{align*}
where we have used the abbreviation
$\ad_a^\ga(b):=\ad_a^{\ga_1}(b)\cldots \ad_a^{\ga_n}(b)$.

\subsubsection{Magnus' Theorem} %{{{3

The calculus developed above leads to a simple proof of Magnus' Theorem
\cite{Mag1954}.  Let $A:I\to\sA$ be a smooth function from the interval
$I\subset\R$ into $\sA$; for convenience assume $0\in I$ and let $Y:I\to \sA$ be
the solution of the initial value problem
\begin{equation}
	Y'(t)= A(t)\cdot Y(t),\quad Y(0)=\oneA.
\end{equation}
Let $\Omega(t):= \log Y(t)$ which is defined  at least for small $t$, $\Omega(0)=0$.

From \Eqref{DDNote-G2.6.3} we infer for a holomorphic function $f$ that
\[
\frac{d}{dt} f\bl Y(t)\br =
[Y(t)\pup0;,Y(t)\pup1;]f(Y'(t)),
\]
hence (we omit the $t$ argument for brevity)
\[\begin{split}
\Omega'
& =\, [Y\pup0;,Y\pup1;] \log ( Y' )
=\, \frac {\log Y\pup 0; - \log Y\pup 1; }{%
Y\pup0; - Y\pup 1;} \bl Y\pup1; A\br\\
& =\, \frac{\Omega\pup0;-\Omega\pup1;}{%
e^{\Omega\pup0;}-e^{\Omega\pup1;}}e^{\Omega\pup1;} \bl A \br=\,
\frac{\ad_{\Omega}}{e^{\ad_{\Omega}}-\Id} (A).
\end{split}\]
Thus $\Omega(t)$ is the solution of the nonlinear initial value problem
\begin{equation} \label{DDNote-GMagnus}
	\Omega' = \frac{\ad_{\Omega}}{e^{\ad_{\Omega}}-\Id} (A)
	= \sum_{n=0}^\infty \frac{B_n}{n!} \ad_\gO^n(A), \quad \Omega(0)=0.
\end{equation}
Recall from \Eqref{DDNote-G2.12-1} that
$\ad_\gO(A)=\nabla_\gO(A)=(\gO\pup0;-\gO\pup1;)(A)$.  \Eqref{DDNote-GMagnus} is
due to Magnus \cite{Mag1954}, for a more recent exposition see
\cite[Sec.~2]{Bla2009}.

%**********************************************************
\subsection{The holomorphic Rearrangement Lemma} %{{{2
\label{ss.analyt.rearrangement}
%**********************************************************

We present a version of Connes' Rearrangement Lemma
\cite[Lemma 6.2]{ConMos2011}, \cite{Les2014} in the framework of the na\"ive
multivariate holomorphic functional calculus.

Before stating the Theorem we introduce some notation.  From \cite{Les2014}
recall: for $a\in\sA$ put $A:= e^a$ and, \cf \Eqref{DDNote-G2.11},
\eqref{DDNote-G2.12},
$\Delta_a^{(j)} := \exp(- \nabla_a^{(j)})$,
$1\le j\le n$. Then
\begin{equation}
   A\pup j; = A\pup 0; \modul  1;\cldots \modul  j;,
      \quad  j\ge 1. \label{EQTREAR5}
\end{equation}

Furthermore, for some $0< \delta< \pi$ denote by $S_\delta$ the strip
$S_\delta := \bigsetdef{ z\in\C}{ |\Im z| < \delta}$ and by $\Lambda_\delta$ the
sector
$ \Lambda_\delta := \bigsetdef{ z\in \C\setminus\{0\} }{ |\arg z| < \delta }.$

Clearly, $\exp( S_\delta ) \subset \Lambda_\delta$. Hence by the Spectral
Mapping Theorem if $a\in \sA$ with $\spec(a)\subset S_\delta$ we have
$ \spec( A=e^a ) \subset \Lambda_\delta$ and
$ \spec( A\ii ) \subset \Lambda_\delta$.

We add in passing that it follows from Gelfand Theory that for commuting
elements $x, y$ in a Banach algebra one has
$\spec( x\cdot y) \subset \spec(x) \cdot \spec(y)$. This implies in particular
that if $\spec(A) \subset \Lambda_\delta$ then
\begin{equation}
     \spec\bl \modul  1;\cldots \modul  j; \br \subset \Lambda_{2\delta}.
\end{equation}
To see this note that
$\modul  1;\cldots \modul  j;
= A\ii \otimes \oneA\otidots \oneA\otimes A\otimes\ldots$ which is a product of
the two commuting elements $A\ii\otimes \oneA\otimes\ldots$ and
$\oneA\otidots \oneA \otimes A\otimes\oneA\otimes\ldots$ both of which have
spectrum contained in $\Lambda_\delta$.

After these preparations we have

\begin{theorem}[Holomorphic Rearrangement Lemma]\label{TAREAR} Let $\sA$ be a
unital Banach algebra and $a\in\sA, A:=e^a$ with $\spec(a) \subset S_\delta$ for
some $0<\delta<\pi/2$.  Furthermore, let $f_0,\ldots,f_p$ be holomorphic functions
defined in the sector $\Lambda_{2\delta}$ satisfying estimates
\begin{equation}\label{eq-rear1}
\begin{split}
    |f_j( s) | &\le C \cdot |s| ^{-\ga_j}, \quad s\in\Lambda_{2\delta},
                                                 |s| \gg 0,\\
    |f_j( s) | &\le C \cdot |s| ^{-\gb_j}, \quad s\in\Lambda_{2\delta},
                                                 |s| \ll 1
\end{split}
\end{equation}
with $\sum\limits_j\ga_j > 1$, $\sum\limits_j \gb_j <1$.

Then the functions
\begin{equation}\label{eq-rear2}
\begin{split}
     F( s_0,\ldots, s_p ) &:=
         \int_0^\infty f_0(u \cdot s_0)\cldots f_p( u \cdot s_p) \dint u,\\
     G( \gl_1,\ldots, \gl_p )& := \int_0^\infty f_0(u) \cdot
           f_1(u\cdot\gl_1)\cldots f_p( u\cdot \gl_p) \dint u
\end{split}
\end{equation}
are holomorphic in $\Lambda_{2\delta}^{p+1}$ resp. $\Lambda_{2\delta}^p$.
Moreover, for $s = (s_0,\ldots, s_p)\in \Lambda_\delta^{p+1}$ one
has
\begin{equation}\label{eq-rear3}
    F(s ) = s_0\ii\cdot G(s_0\ii\cdot s_1, \ldots, s_0\ii\cdot s_p)
\end{equation}
and for $b_1,\ldots,b_p\in \sA$ the following holds
\begin{equation}\label{EQTREAR1}\begin{split}
\int_0^\infty &f_0(u \cdot A)\cdot b_1\cdot f_1(u\cdot A)
      \cldots b_p\cdot f_p(u \cdot A) \dint u \\
   & = F( A\pup 0;,\ldots, A\pup p; )( b_1\cldots b_p ) \\
   & = A^{-1}\cdot G( \modul 1;, \modul 1;\cdot \modul 2;,\ldots,
          \modul 1;\cldots\modul p;)(b_1\cldots b_p),\\
   &= A^{-1}   \int_0^\infty f_0(u) \cdot f_1(u\cdot \modul 1;)
   \cdot  f_2(u\cdot \modul 1;\modul 2;)\cldots
          f_p(u \cdot \modul 1;\cldots\modul p;) \dint u\\
   & \qquad \qquad
        \bl b_1\cldots b_p\br.
\end{split}
\end{equation}
\end{theorem}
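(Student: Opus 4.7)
My plan is to prove the theorem in two stages: a scalar analysis establishing holomorphy of $F$ and $G$ together with the rescaling identity \eqref{eq-rear3}, and then the operator identity \eqref{EQTREAR1}, which will follow by substituting operator arguments into the scalar facts via the na\"ive holomorphic functional calculus of Theorem \ref{thm:NHFC}.

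\emph{Scalar analysis.} For a compact $K \subset \Lambda_{2\delta}^{p+1}$, the estimates \eqref{eq-rear1} give uniform bounds $|f_0(us_0) \cldots f_p(us_p)| \le C u^{-\sum_j \alpha_j}$ for $u \gg 1$ and $\le C u^{-\sum_j \beta_j}$ for $u \ll 1$, uniformly in $s \in K$. The hypotheses $\sum_j \alpha_j > 1$ and $\sum_j \beta_j < 1$ then ensure absolute integrability on $(0,\infty)$, and holomorphy of $F$ (and analogously of $G$) would follow from Morera's theorem in each variable after a Fubini exchange of $\int_0^\infty du$ with a small contour integral. For \eqref{eq-rear3} I would change variables $u \mapsto u/s_0$ in the definition of $F(s)$: for $s_0 \in \Lambda_\delta$ the rotated ray $s_0^{-1}[0,\infty)$ lies in the sector $|\arg u| < \delta$, and in the wedge between this ray and $[0,\infty)$ the integrand is holomorphic with decay at both endpoints, so closing the contour with two arcs at radii $R \to \infty$ and $r \to 0$ (both vanishing by the decay estimates) yields the identity.

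\emph{Operator identities.} Since $\spec_\sA(a) \subset S_\delta$, the Spectral Mapping Theorem gives $\spec_\sA(A^{(j)}) \subset \Lambda_\delta \subset \Lambda_{2\delta}$ for each $j$, so $(A^{(0)}, \ldots, A^{(p)})$ is a commuting tuple in $\sA^{\otimes (p+1)}_\pi$ whose na\"ive joint spectrum sits in $\Lambda_{2\delta}^{p+1}$, and Theorem \ref{thm:NHFC} applies. The contour formula \eqref{eq:NHFC.2} combined with the pairing rule \eqref{DDNote-G2.10} produces
\[
  F(A^{(0)}, \ldots, A^{(p)})(b_1 \cldots b_p) = \int_{\gamma_0} \!\!\cdots\!\! \int_{\gamma_p} F(z)\, (z_0 - A)^{-1} b_1 \cdots b_p (z_p - A)^{-1}\, \dsl z.
\]
Substituting the integral definition of $F$ and applying Fubini (legitimate by the scalar analysis and compactness of the $\gamma_j$), the inner $z_j$-integrals decouple and each reduces, via the one-variable holomorphic functional calculus, to $f_j(uA)$, giving the first equality in \eqref{EQTREAR1}. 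For the second equality I would apply \eqref{eq-rear3} inside the commutative Banach subalgebra of $\sA^{\otimes (p+1)}_\pi$ generated by $A^{(0)}, \ldots, A^{(p)}$ and their resolvents; the na\"ive calculus then delivers
\[
  F(A^{(0)}, \ldots, A^{(p)}) = (A^{(0)})^{-1}\, G\bl (A^{(0)})^{-1} A^{(1)}, \ldots, (A^{(0)})^{-1} A^{(p)} \br,
\]
and \eqref{EQTREAR5} simplifies the arguments to $\modul 1;,\ \modul 1;\modul 2;,\ \ldots,\ \modul 1;\cldots\modul p;$. Pairing with $b_1 \cldots b_p$ brings the $(A^{(0)})^{-1}$ factor to the left as a multiplication by $A^{-1}$, and reversing the Fubini/contour expansion used for the first equality (now with $f_0(u)$ a scalar coefficient) yields the final displayed formula.

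The main technical obstacle throughout is the justification of the repeated Fubini exchanges between $\int_0^\infty du$ and the multi-contour integrals from the na\"ive calculus, especially in deriving the first equality of \eqref{EQTREAR1}, along with the vanishing of the closing arcs in the contour rotation of the scalar step; both ultimately rest on the integrability bounds \eqref{eq-rear1} together with compactness of the contours $\gamma_j$.
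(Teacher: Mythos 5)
Your proof is correct and follows essentially the same route as the paper: establish the scalar identity \eqref{eq-rear3} and then transport everything to the operator level via the na\"ive holomorphic functional calculus applied to the commuting tuples $A\pup 0;,\ldots,A\pup p;$ resp.\ $\modul 1;,\ldots,\modul 1;\cldots\modul p;$. The only cosmetic differences are that the paper obtains \eqref{eq-rear3} by the substitution $\tilde u = u\cdot s_0$ for real $s_0>0$ followed by uniqueness of analytic continuation, whereas you rotate the contour directly, and that you spell out the Fubini exchanges which the paper dismisses as ``clear''.
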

\begin{remark} This Theorem should be compared to \cite[Cor. 3.5]{Les2014}.
The present result is not so much interesting because of its formulation
(which is only very mildly more general than \cite[Lemma 6.2]{ConMos2011})
but rather because of its almost trivial proof. The essence of the
Rearrangement Lemma is the trivial substitution $\int_0^\infty \phi(r\cdot u)
\dint u = r\ii\cdot \int_0^\infty \phi(u)\dint u$.
\end{remark}

\begin{proof}
The Theorem is more or less self-evident. The estimates
\Eqref{eq-rear1} guarantee the existence of the integrals
\Eqref{eq-rear2}. For real $s_0>0$  \Eqref{eq-rear3}
follows by changing variables $\tilde u = u\cdot s_0$ in the first
integral in \Eqref{eq-rear2}. For general $s_0$ it then follows
by the uniqueness of analytic continuation. The domains of
definition of the functions $F$ and $G$ are such that
we may apply the na\"ive holomorphic functional calculus
to the commuting elements $A\pup 0;,\ldots, A\pup p;$
resp. $\modul 1;, \modul 1;\cdot\modul 2;, \ldots,
\modul 1;\cldots \modul p;$. Then formula
\Eqref{EQTREAR1} is now clear.
\end{proof}

%##############################################################################
\appendix
\section{Some Combinatorial formulas} %{{{1
\label{s.Appendix}
%##############################################################################

%%\details{These are details}

%**********************************************************
\subsection{Notation} %{{{2
\label{ss.notation}
%**********************************************************
We fix some notation which will be used frequently. $\N$ denotes the set of
natural numbers including $0$.  For a real number $x>-1$ we write
$x!=\Gamma(x+1)$. If $x,\ga\in\R^n$ then we write, whenever each term is
defined, $\ga!:=\prod_j \ga_j!  =\prod_j\Gamma(\ga_j+1)$, $x^\ga :=\prod_j
x_j^{\ga_j}$, $|\ga|=\sum_j \ga_j$. $|\cdot|$ should not be confused with the
sum norm; the latter will play no role in this paper.

Furthermore, we introduce the abbreviations
\begin{align}
  \ga!?&:= \ga!\cdot (\ga_1+1)\cdot (\ga_1+\ga_2+2)\cldots (|\ga|+n)
             = \ga!\cdot \prod_{j=1}^n \Bl j+\sum_{l=1}^j \ga_l\Br
                \label{DDNote-G1.1}\\
  \ga?!&:= \ga!\cdot (\ga_n+1)\cdot (\ga_n+\ga_{n-1}+2)\cldots
             (|\ga|+n)\nonumber\\
       & = \ga! \cdot \prod_{j=1}^n \Bl j+\sum_{l=0}^{j-1} \ga_{n-l}\Br.
             \label{DDNote-G1.2}
\end{align}

Multiindices will usually be denoted by greek letters. Sums of the form
$\sum_\ga$ will be over $\ga\in\N^n$ with further restrictions indicated below
the $\sum$ sign.  We use the multiindex notation for multinomial coefficients.
That is
\begin{equation} \label{DDNote-G1.3}
  {\ga\choose \gb} = \prod_j {\ga_j\choose \gb_j} =
    \prod_j \frac{\ga_j!}{\gb_j! (\ga_j-\gb_j)!},
\end{equation}
with the usual restrictions on the parameters. In particular we put
${\ga\choose \gb}$ to $0$ if $\ga_j<\gb_j$ or $\gb_j<0$ for at least one index
$j$.

We denote by
\begin{equation} \label{DDNote-G1.4}
  \Simplex_n=\bigsetdef{t\in\R^n}{0\le t_n\le\ldots\le t_1\le 1}
\end{equation}
the standard simplex. Sometimes, the functions
\begin{align}
 s_0  & = (1-t_1),\; s_1= (t_1-t_2),\ldots, s_n=t_n, \label{DDNote-G1.5} \\
 t_n  & = s_n, \; t_{n-1} = s_{n-1}+s_n,\ldots, t_1 = s_1+\ldots+s_n
\end{align}
on $\Simplex_n$ will be more convenient.  Note that $s_j\ge 0$ and
$\sum_{j=0}^n s_j=1$.  Integration over $\Simplex_n$ will always be with respect
to the measure $dt:=dt_1\ldots dt_n= ds_1\ldots ds_n=:ds$, which
differs from the surface measure by a multiplicative constant.

%**********************************************************
\subsection{Divided Differences} %{{{2
\label{ss.DD}
%**********************************************************
We recall the defining formulas for the divided differences of a smooth resp.
holomorphic function. Cf.~, e.~g., \cite{deB2005}, \cite{MT1951}.  See also
\cite[Appendix A]{Les2014}.

%\smallsec{}
%**********************************************************
\subsubsection{}%{{{3
%I don't understand why this subsection, because I just organized. I left here, see if we will need in the future.
%**********************************************************
%**********************************************************
Let $f$ be a holomorphic function on an open set $U\subset \C$ and consider
$x_0, x_1,\ldots, x_n$ a priori distinct points in $U$. Then one defines
recursively the \emph{divided differences}
\begin{equation}\label{EQDD1}\begin{split}
     [x_0]f &:= f(x_0),\\
     \DDots x_0,x_n; f &:= \mlfrac 1/x_0-x_n;\bl \DDots x_0,x_{n-1}; f
         - \DDots x_1,x_n; f\br.
\end{split}\end{equation}
The first few divided differences are therefore
\begin{small}
\begin{align*}
    [x_0,x_1]f & = \mlfrac f(x_0)/(x_0-x_1);+ \mlfrac
                        f(x_1)/(x_1-x_0);,\\
    [x_0,x_1,x_2] f  & = \mlfrac f(x_0)/(x_0-x_1)(x_0-x_2);+
                         \mlfrac f(x_1)/(x_1-x_0)(x_1-x_2);+
                         \mlfrac f(x_2)/(x_2-x_0)(x_2-x_1);,
\end{align*}
\end{small}
and by induction one shows the explicit formula
\begin{equation}\label{EQDD2}
%E N V (equation*)
     \DDots x_0,x_n; f  = \sum_{k=0}^n f(x_k) \cdot
            \prod_{j=0, j\not=k}^n (x_k-x_j)\ii,
\end{equation}
resp. the Genocchi-Hermite integral formula \cite[Sec.~1.6]{MT1951},
\cite[Sec.~9]{deB2005}\footnote{%
%FOOTNOTE
According to the historical remarks in \cite[Sec.~9]{deB2005} the formula is due
to Genocchi who communicated it to Hermite in a letter.  }
% End FOOTNOTE
%The measure on the simplex
%$\bigtriangleup_n=\bigsetdef{(s_0,\ldots,s_n)}{\sum s_j = 1, s_j\ge 0}$
%is the surface measure normalized to volume $\mlfrac 1/n!;$ (the
%simplex $\bigtriangleup_n$ has volume $\mlfrac \sqrt{n+1}/n!;$) and is
%therefore given by $d s_1\ldots ds_{n}$ or $ds_0\ldots ds_{n-1}$.}
\begin{equation}\label{EQHermite}\begin{split}
   &\DDots x_0,x_n; f = \int_{\sum\limits_{j=0}^n s_j =1, s_j>0}
       f^{(n)}\bl\sum\limits_{j=0}^n s_j x_j\br ds_1\dots ds_n\\
       &= \int_{0\le t_n\le\ldots \le t_1\le 1}
       f^{(n)}\bl (1-t_1)x_0+\ldots+(t_{n-1}-t_n)x_{n-1}+t_nx_n\br
       dt_1\dots dt_n,
\end{split}\end{equation}
when the closed convex hull of $x_0,\ldots, x_n$ is a subset of $U$.  If
$\gamma$ is a closed curve in the domain of $f$ encircling the points
$x_0,\ldots,x_n$ exactly once then by the Residue Theorem and \Eqref{EQDD2} we
have \cite[Sec.~1.7]{MT1951}
\begin{equation}\label{EQDD3}
   \DDots x_0,x_n; f
      = \mlfrac 1/2\pi i;
          \oint_\gamma f(\gz)\cdot \prod_{j=0}^n (\gz-x_j)\ii d\gz.
\end{equation}

%**********************************************************
\subsection{Basic computations}%{{{2
\label{ss.bascicomputations}
%**********************************************************

The following computations are completely elementary and put here for the
convenience of the reader.  We first calculate the integrals of the power
functions $t^\ga, s^\ga$ (multiindex notation!) over the standard simplex.  Then
we calculate the divided differences of the power function $x\mapsto x^N$,
\cf~\refMilTh.

\begin{lemma} \label{DDNote-L1.1}
 Let $\ga=(\ga_0,\ldots,\ga_n)=: (\ga_0,\ga') \in\R^{n+1}$. Then
\begin{equation} \label{DDNote-G1.6}
\ISimplex n s^\ga ds = \frac{\ga!}{(|\ga|+n)!},\quad \text{ for }
\ga_0,\ldots ,\ga_n > -1,
\end{equation}
and
\begin{equation} \label{DDNote-G1.7}
\ISimplex n t_1^{\ga_1}\cldots t_n^{\ga_n} dt =
\frac{(|\ga|+n+1) \ga!}{\ga?!} = \frac{\ga'!}{ \ga' ?!},
\end{equation}
for $\ga_n+1>0,\ga_n+\ga_{n-1}+2>0,\ldots, |\ga|+n+1>0$.
\end{lemma}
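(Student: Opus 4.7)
The first identity is the classical Dirichlet--Liouville integral formula, and I would establish it by induction on $n$. The base case $n=1$ is the beta integral $\int_0^1 (1-s_1)^{\ga_0} s_1^{\ga_1}\,ds_1 = B(\ga_0+1,\ga_1+1) = \ga_0!\,\ga_1!/(\ga_0+\ga_1+1)!$. For the inductive step I would hold $s_1,\ldots,s_{n-1}$ fixed and integrate out $s_n$ over $[0,\,1-s_1-\ldots-s_{n-1}]$; the substitution $s_n=(1-s_1-\ldots-s_{n-1})u$, $u\in[0,1]$, factors this inner integral as $(1-s_1-\ldots-s_{n-1})^{\ga_0+\ga_n+1}\cdot B(\ga_n+1,\ga_0+1)$. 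The outer integral then has the same form on the $(n-1)$-simplex, with $\ga_0$ shifted to $\ga_0+\ga_n+1$ and the last component $\ga_n$ removed; applying the inductive hypothesis together with the beta-function value recombines the factorials telescopically into $\ga!/(|\ga|+n)!$.

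For the second identity I would compute by iterated integration, starting from the innermost variable. Beginning with $\int_0^{t_{n-1}} t_n^{\ga_n}\,dt_n = t_{n-1}^{\ga_n+1}/(\ga_n+1)$ and proceeding downward, a straightforward induction on $k$ (from $k=n$ down to $k=1$) shows that after integrating $t_n,\ldots,t_{k+1}$ the intermediate result is
\[
   \frac{t_k^{(n-k)+\ga_{k+1}+\ldots+\ga_n}}{(\ga_n+1)(\ga_n+\ga_{n-1}+2)\cdots\bl\ga_n+\ldots+\ga_{k+1}+(n-k)\br},
\]
because each new integration raises the exponent by one and contributes exactly one further denominator factor. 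Multiplying by $t_k^{\ga_k}$ preserves the pattern, and the final integration of $t_1$ over $[0,1]$ produces the closing denominator factor $n+|\ga'|$, giving
\[
   \ISimplex n t_1^{\ga_1}\cdots t_n^{\ga_n}\,dt = \frac{1}{\prod_{j=1}^{n}\bl j+\ga_n+\ga_{n-1}+\ldots+\ga_{n-j+1}\br}.
\]

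Matching this product against the definition of $\ga'?!$ in \Eqref{DDNote-G1.2} immediately yields the right-hand expression $\ga'!/\ga'?!$. The equivalent form $(|\ga|+n+1)\ga!/\ga?!$ then follows by cancellation: the rightmost factor of $\ga?!$ (treated as the ``$?!$'' of the $(n+1)$-tuple $\ga$) is $|\ga|+n+1$, which cancels against the numerator, while the extra $\ga_0!$ in $\ga!$ cancels against the corresponding factor in $\ga?!$. The computations throughout are routine; the only mild pitfall is the reverse-order indexing built into the $?!$ convention, which must be tracked carefully when comparing the telescoped denominator to the notation of Section \ref{ss.notation}. Convergence of all integrals under the stated exponent restrictions is automatic at each step.
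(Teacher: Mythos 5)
Your proposal is correct and follows essentially the same route as the paper: the first identity by induction on $n$ reducing to a beta integral (you peel off the innermost variable with a shifted exponent, the paper rescales the remaining sub-simplex — the same telescoping of factorials either way), and the second by iterated integration from the innermost variable, yielding the product $\prod_{j=1}^{n}(j+\ga_n+\cdots+\ga_{n-j+1})$ in the denominator. Your final matching of that product with the $\ga'?!$ and $\ga?!$ conventions, including the cancellation of the factor $|\ga|+n+1$ and of $\ga_0!$, is also exactly what the paper's ``the claim follows'' leaves to the reader.
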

\begin{proof} 1. For $n=1$ this is the relation between Euler's beta function
and the Gamma function. Namely,
\[
  \ISimplex{1} (1-t)^{\ga_0} t^{\ga_1} dt = \int_{0}^{1} (1-t)^{\alpha_0} t^{\alpha_1} dt =
  \frac{\Gamma(\ga_0+1)\gG(\ga_1+1)}{\gG(|\ga|+2)}
  =\frac{\ga!}{(|\ga|+1) !}.
\]
Proceeding by induction we consider $n+1$ and write the variables in the form
$s=:(s',s_{n+1})=:(s',t), \ga=(\ga',\ga_{n+1})$.  Then changing integration
variables $s' =(1-s_{n+1})\tilde s$ yields
\[\begin{split}
  \ISimplex{n+1} s^\ga ds
  & = \int_0^1
    (1-t)^{|\ga'|+n} \cdot t^{\ga_{n+1}} \,dt \cdot
   \ISimplex n  (\widetilde s)^{\ga'} \, d\tilde s\\
  & = \frac{\ga'!}{(|\ga'|+n)!} \cdot
       \int_0^1 (1-t)^{|\ga'|+n}\cdot t^{\ga_{n+1}} \, dt \;
        \\
  & =  \frac{(|\ga'|+n)! \cdot \ga_{n+1}! \cdot \ga'!}{%
               (|\ga|+n+1)!\cdot (|\ga'|+n)!}
    =  \frac{\ga!}{(|\ga|+n+1)! }.
\end{split}\]

2.
\[\begin{split}
\ISimplex n &t_1^{\ga_1}\cldots t_n^{\ga_n} dt
     = \ISimplex {n-1} \int_0^{t_{n-1}} t_n^{\ga_n} \, dt_n \,
        t_1^{\ga_1}\cldots t_{n-1}^{\ga_{n-1}} \, dt'\\
    & = \frac{1}{\ga_n+1}\;\ISimplex {n-1}
        t_1^{\ga_1}\cldots t_{n-2}^{\ga_{n-2}}
        t_{n-1}^{\ga_n+\ga_{n-1}+1} \, dt'  \\
    & = \frac{1}{(\ga_n+1)\cdot (\ga_n+\ga_{n-1}+2)\cldots
      (\ga_1+\ldots+\ga_n+n)},
%      = \frac{(|\ga|+n+1) \ga!}{\ga?!},
%      .\qedhere
%   \frac{(|\ga|+n+1) \ga!}{\ga?!},
\end{split}\]
and the claim follows.
\end{proof}

%**********************************************************
%\subsection{Divided differences of power functions}
%\label{ss.DD.powerfunctions}
%**********************************************************

\begin{prop}\label{DDNote-P1.2} Let $N\in\Z$ be an integer. Then for
  $z_0,\ldots, z_n\in\C\setminus\{0\}$
\begin{equation} \label{DDNote-G1.8}
  [z_0,\ldots, z_n] \id^N
       =\begin{cases}\displaystyle
       \sum\limits_{|\ga|=N-n} z^\ga,& N\ge n,\\
         \displaystyle
         0, & 0\le N<n,  \\
         \displaystyle
     \frac{(-1)^n}{z_0\cldots z_n}
       \sum\limits_{|\ga|=|N|-1} z^{-\ga},& N<0.
   \end{cases}
\end{equation}
In the sums the letter $\ga$ stands for multiindices in $\N^{n+1}$.

The case $N<0$ can alternatively be written as
$ (-1)^n \sum\limits_{\ga_j\le -1, |\ga|=N-n} z^\ga$.
Furthermore,
\begin{equation} \label{EqDDResolvent} % was G1.9
   [z_0,\ldots,z_n](\gl - \id)\ii = \prod_{j=0}^n (\gl-z_j)\ii.
\end{equation}
\end{prop}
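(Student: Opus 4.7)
The plan is to treat the three ranges of $N$ separately, using the integral representation best suited to each, and then to derive the resolvent identity \eqref{EqDDResolvent} by a direct partial-fraction computation.

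For $N\ge n$, I would invoke the Genocchi-Hermite formula \eqref{EQHermite}. Since $(\id^N)^{(n)}(x)=\frac{N!}{(N-n)!}x^{N-n}$, expanding $\bl s_0z_0+\ldots+s_nz_n\br^{N-n}$ by the multinomial theorem and integrating termwise yields
\begin{equation*}
[z_0,\ldots,z_n]\id^N \;=\; \frac{N!}{(N-n)!}\sum_{|\ga|=N-n}\frac{(N-n)!}{\ga!}\,z^\ga \int_{\Simplex_n}s^\ga\,ds.
\end{equation*}
By Lemma \ref{DDNote-L1.1} the simplex integral equals $\ga!/(|\ga|+n)!=\ga!/N!$, and the factorials collapse to give $\sum_{|\ga|=N-n}z^\ga$. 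For $0\le N<n$, the same formula applies but $(\id^N)^{(n)}\equiv 0$, so the divided difference vanishes.

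For $N<0$ the Genocchi-Hermite formula is no longer available, since $\id^N$ is singular at the origin. Instead I would appeal to the contour integral formula \eqref{EQDD3} with $\gamma$ a curve encircling $z_0,\ldots,z_n$ but not the origin. The idea is to deform $\gamma$ outward to a circle of radius $R\to\infty$: the integrand $\zeta^N\prod_{j=0}^n(\zeta-z_j)^{-1}$ decays like $|\zeta|^{N-n-1}$ at infinity, so that contribution vanishes and the deformation picks up exactly minus the residue at $\zeta=0$. Expanding
\begin{equation*}
\prod_{j=0}^n(\zeta-z_j)^{-1} \;=\; \frac{(-1)^{n+1}}{z_0\cdots z_n}\sum_{k\in\N^{n+1}}\zeta^{|k|}z^{-k}
\end{equation*}
geometrically near $\zeta=0$ and multiplying by $\zeta^N$, the coefficient of $\zeta^{-1}$ comes from $|k|=|N|-1$, producing the claimed formula with overall sign $(-1)^n$ (the extra sign flip arising from moving the residue at $0$ to the left-hand side). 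The alternative form follows by the reindexing $\ga=-k-(1,\ldots,1)$, which sends $\ga_j\le -1$, $|\ga|=N-n$ bijectively onto $k\in\N^{n+1}$, $|k|=|N|-1$.

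For the resolvent identity \eqref{EqDDResolvent} the most efficient route is the explicit divided-difference formula \eqref{EQDD2} applied to $f(\zeta)=(\lambda-\zeta)^{-1}$, which gives
\begin{equation*}
[z_0,\ldots,z_n](\lambda-\id)^{-1} \;=\; \sum_{k=0}^n \frac{1}{\lambda-z_k}\prod_{j\ne k}(z_k-z_j)^{-1}.
\end{equation*}
This is precisely the partial-fraction expansion of $\prod_{j=0}^n(\lambda-z_j)^{-1}$ in the variable $\lambda$ when the $z_j$ are pairwise distinct; the general case follows by continuity in $(z_0,\ldots,z_n)$ away from coincident points. The main care needed in the whole argument is the sign and multiindex bookkeeping in the $N<0$ case, where one must verify that the residue computation yields $(-1)^n$ rather than $(-1)^{n+1}$ and that the geometric expansion and reindexing match up correctly.
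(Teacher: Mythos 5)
Your argument is correct, but it diverges from the paper's in two of the four steps, so a comparison is worthwhile. The paper runs the single contour integral $I(R)=\oint_{|\gz|=R}\gz^N\prod_{j=0}^n(\gz-z_j)\ii\,\dsl\gz$ through \emph{all} three ranges of $N$: for $N\ge n$ it expands the product geometrically at infinity and reads off $\sum_{|\ga|=N-n}z^\ga$ directly from the integral, for $0\le N<n$ it notes $I(R)\to0$, and for $N<0$ it computes the residue at the origin exactly as you do. You instead handle $N\ge 0$ via Genocchi--Hermite together with the multinomial theorem and the beta-type simplex integral of Lemma \ref{DDNote-L1.1}; this is a perfectly valid alternative (and the factorial bookkeeping $\frac{N!}{(N-n)!}\cdot\frac{(N-n)!}{\ga!}\cdot\frac{\ga!}{N!}=1$ checks out), with the mild advantage of making the vanishing for $0\le N<n$ completely transparent, and the mild cost of invoking two auxiliary results where the paper needs only one expansion. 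For the resolvent identity the routes also differ: you use the explicit sum \eqref{EQDD2} and recognize the partial-fraction decomposition of $\prod_j(\gl-z_j)\ii$ (with a continuity argument for coincident points), whereas the paper derives the reflection identity $[z_0,\ldots,z_n]f(\gl-\cdot)=(-1)^n[\gl-z_0,\ldots,\gl-z_n]f$ and then specializes the already-proved case $N=-1$; the paper's route reuses the main formula and avoids the distinctness caveat, while yours is more self-contained and elementary. Your $N<0$ computation, including the sign $(-1)^{n+1}\mapsto(-1)^n$ and the reindexing $\ga=-k-(1,\ldots,1)$ for the alternative form, matches the paper's.
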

\begin{proof} We use the contour integral formula \refDDcontour. Let $R$ be
large enough such that $|z_0|,\ldots,|z_n|<R$ and consider the integral
\[\begin{split}
I(R)&:= \oint_{|\gz|=R} \gz^N \cdot \prod_{j=0}^n (\gz - z_j)\ii
    \,\dsl \gz,
    \qquad \dsl\gz := \frac{1}{2\pi i} d\gz \\
  & = \Res_{\gz=0} \gz^N\cdot \prod_{j=0}^n (\gz-z_j)\ii +
     [z_0,\ldots, z_n]\id^N.
\end{split}
\]
If $N\ge 0$ the residue vanishes. Moreover, since $I(R)$ is independent of
$R\to\infty$ we find $I(R)=0$ for $N<n$ since then $I(R)\to 0$ as $R\to\infty$.
For $N\ge n$, expanding the integrand yields
\[
  I(R)  = \MultiSum \ga {n+1} \oint_{|\gz|=R}
              \gz^{N-n-|\ga|-1}\,\dsl\gz \cdot z^\ga
        = \sum_{|\ga|=N-n} z^\ga.
\]
If $N<0$ then since $I(R)=0$
\[\begin{split}
 [z_0,\ldots,z_n]\id^N &= -\Res_{\gz=0} (-1)^{n+1}
    \sum_{\ga\ge 0} \gz^{|\ga|+N}\cdot z^{-\ga} \cdot (z_0\cldots z_n)\ii\\
    &=\frac{(-1)^n}{z_0\cldots z_n}
       \sum\limits_{|\ga|=|N|-1} z^{-\ga}.
\end{split}
\]
From the formulas \refDDrecursion\ or \refDDcontour\ for the divided differences
one immediately infers that
\[
  [z_0,\ldots,z_n]f(\gl-\cdot)
  = (-1)^n [\gl-z_0,\ldots,\gl-z_n]f.
\]
We apply this to the function $x\mapsto x\ii$; together with \Eqref{DDNote-G1.8}
we then obtain \Eqref{EqDDResolvent}.
\end{proof}

%**********************************************************
\subsection{Power series expansion of divided differences} %{{{2
%\label{DDNote-ss1.2}
\label{ss.powerseriesDD}
%**********************************************************

\begin{prop}\label{DDNote-P1.3} Let $f$ be a smooth function in a neighborhood
of $0\in \R$.  Then the Taylor series of $\DD^n
f(x_0,\ldots,x_n):=[x_0,\ldots,x_n]f$ about $0$ is given by
\begin{equation} \label{DDNote-G1.10}
\MultiSum{\ga}{n+1} \frac{f^{(n+|\ga|)}(0)}{(|\ga|+n)!} \, x^\ga.
\end{equation}
The radius of convergence of the series \Eqref{DDNote-G1.10} for $DD^nf$ is at
least as large as the radius of convergence of the Taylor series of $f$. If $f$
is holomorphic then so is $\DD^nf$.
\end{prop}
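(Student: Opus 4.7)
Both claims reduce to the Genocchi--Hermite representation \eqref{EQHermite},
\[
\DDots x_0,x_n; f = \int_{\Simplex_n} f^{(n)}\Bl\sum_{j=0}^n s_j x_j\Br ds,
\]
combined with the simplex moment $\int_{\Simplex_n} s^\ga\, ds = \ga!/(|\ga|+n)!$ from Lemma \ref{DDNote-L1.1}.

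For the formal Taylor identity I would differentiate under the integral sign at $x = 0$. Linearity of $x \mapsto \sum_j s_j x_j$ together with the chain rule gives
\[
\partial_x^\ga\, f^{(n)}\Bl\sum_{j=0}^n s_j x_j\Br\bigg|_{x=0} = f^{(n+|\ga|)}(0)\cdot s^\ga,
\]
so integration over $\Simplex_n$ produces the Taylor coefficient $f^{(n+|\ga|)}(0)/(|\ga|+n)!$ of $x^\ga$ in $\DD^n f$, matching \eqref{DDNote-G1.10}.

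For the convergence claim, let $R$ be the radius of convergence of the Taylor series of $f$ at $0$ and fix $r < R$. For $|x_j| \le r$ the convex combination $\sum_j s_j x_j$ has modulus at most $r$ on $\Simplex_n$, so $f^{(n)}$ may be replaced by its convergent Taylor series inside the integral. Expanding $\bl\sum_j s_j x_j\br^k$ by the multinomial theorem and regrouping by $\ga \in \N^{n+1}$ rewrites the integrand as $\sum_\ga \frac{f^{(n+|\ga|)}(0)}{\ga!}\, s^\ga\, x^\ga$. The simplex identity $\sum_{|\ga|=k} \frac{k!}{\ga!} s^\ga = 1$ dominates this series uniformly on $\Simplex_n \times \{|x_j|\le r\}$ by $\sum_k |f^{(n+k)}(0)|r^k/k!$, which converges since $r < R$; dominated convergence then permits termwise integration, and Lemma \ref{DDNote-L1.1} yields \eqref{DDNote-G1.10} with absolute convergence on the polydisc $|x_j|<R$. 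Holomorphicity of $\DD^n f$ when $f$ is holomorphic is then immediate on this polydisc, and more globally follows from the contour formula \eqref{EQDD3}, which presents $\DD^n f$ as a Cauchy integral depending jointly holomorphically on $(x_0,\ldots,x_n)$ whenever the $x_j$ lie inside a fixed contour in the domain of $f$.

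The only real obstacle is the pleasant arithmetic at the heart of the argument: the $k!$ from the multinomial coefficient cancels the Taylor denominator, and the remaining $\ga!$ cancels the simplex moment's $\ga!$, leaving exactly $1/(|\ga|+n)!$. Beyond this, the analytic interchange of sum and integral is essentially free, thanks to the normalization $\sum_j s_j = 1$ on the simplex.
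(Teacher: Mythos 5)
Your proposal is correct and follows essentially the same route as the paper: Genocchi--Hermite plus the simplex moment computation of Lemma \ref{DDNote-L1.1} and the multinomial theorem, with the $k!$ and $\ga!$ cancellations landing exactly on $1/(|\ga|+n)!$. The only difference is that you spell out the convergence and holomorphicity claims (via the domination by $\sum_k |f^{(n+k)}(0)|r^k/k!$ and the contour formula \eqref{EQDD3}), which the paper dismisses as straightforward; that extra detail is sound and welcome.
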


\begin{proof} We apply Lemma \ref{DDNote-L1.1} and the Genocchi-Hermite integral
formula \refDDGenocchi. Then
\[\begin{split}
  [x_0,\ldots,x_n]f
  &= \ISimplex n f^{(n)} \bl \sum_j s_jx_j \br\, ds
      \sim \sum_{l=0}^\infty \frac{f^{(n+l)}(0)}{l!}
      \ISimplex n \bl \sum_j s_j x_j\br^l\, ds\\
  & = \MultiSum \ga {n+1} \frac{f^{(n+|\ga|)}(0)}{|\alpha|!}
      \frac{|\alpha|!}{\ga!} \ISimplex n s^\ga ds \cdot x^\ga
   =
      \MultiSum{\ga}{n+1} \frac{f^{(n+|\ga|)}(0)}{(|\ga|+n)!} \, x^\ga.
\end{split}
\]
Here the Multinomial Theorem was used.  The remaining claims are straightforward
to check.
\end{proof}

\begin{cor} \label{DDNote-C1.4}
  Let $f$ be a smooth function in a neighborhood of
  $a\in\R$. Then the Taylor series of $F_1(x_1,\ldots,x_n):=
  [a,a+x_1,\ldots, a+x_n]f$ about $0$ is given by
\begin{equation} \label{DDNote-G1.11}
\MultiSum{\ga}{n} \frac{f^{(n+|\ga|)}(a)}{(|\ga|+n)!} \, x^\ga.
\end{equation}
The Taylor series of $F_2(x):=[a,a+x_1,a+x_1+x_2,\ldots,
a+|x|]f$ about $0$ is given by
\begin{equation} \label{DDNote-G1.12}
\MultiSum{\ga}{n} \frac{f^{(n+|\ga|)}(a)}{\ga?!} \, x^\ga.
\end{equation}
The radii of convergence of the series \Eqref{DDNote-G1.11},
\eqref{DDNote-G1.12} are as least as large as the radius of
convergence of the Taylor series of $f$.
If $f$ is holomorphic then so are $F_1$ and $F_2$.
\end{cor}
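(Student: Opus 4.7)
The plan is to derive both expansions directly from the tools already in place: formula \eqref{DDNote-G1.11} reduces to Proposition \ref{DDNote-P1.3} by a shift of variable, while formula \eqref{DDNote-G1.12} follows from the Genocchi-Hermite formula \eqref{EQHermite} combined with the simplex integral computed in Lemma \ref{DDNote-L1.1}. Neither step is deep; the only real work is recognizing the correct change of variables.

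For $F_1$, I would exploit the translation invariance of divided differences. Setting $g(y):=f(a+y)$, one has $[a,a+x_1,\ldots,a+x_n]f=[0,x_1,\ldots,x_n]g$, either directly from the recursive definition \eqref{EQDD1} or from the contour formula \eqref{EQDD3} by the substitution $\zeta\mapsto \zeta+a$. Now apply Proposition \ref{DDNote-P1.3} to $g$: the Taylor series of $(y_0,\ldots,y_n)\mapsto[y_0,\ldots,y_n]g$ about $0$ is $\sum_{\ga\in\N^{n+1}}\frac{g^{(n+|\ga|)}(0)}{(|\ga|+n)!}\,y^\ga$. Substituting $y_0=0$, $y_j=x_j$ kills all terms with $\ga_0>0$ because of the factor $y_0^{\ga_0}$, and the surviving terms are indexed by $\ga\in\N^n$. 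Using $g^{(k)}(0)=f^{(k)}(a)$ gives precisely \eqref{DDNote-G1.11}. The convergence claim is inherited from Proposition \ref{DDNote-P1.3} applied to $g$, whose Taylor series at $0$ has the same radius as that of $f$ at $a$.

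For $F_2$, I would apply the Genocchi–Hermite formula \eqref{EQHermite} directly with $a_j=a+x_1+\ldots+x_j$, $j=0,\ldots,n$. Using the $s\leftrightarrow t$ change of variables from \eqref{DDNote-G1.5}, so that $\sum_{j=k}^n s_j = t_k$, a short rearrangement gives
\[
  \sum_{j=0}^n s_j a_j \;=\; a\sum_{j=0}^n s_j+\sum_{k=1}^n x_k\sum_{j=k}^n s_j
                       \;=\; a+\sum_{k=1}^n t_k x_k.
\]
Hence $F_2(x)=\int_{\Delta_n} f^{(n)}\bl a+\sum_k t_k x_k\br\,dt$. Expanding $f^{(n)}$ in its Taylor series around $a$, applying the multinomial theorem to $\bl\sum_k t_k x_k\br^l$, and collecting terms by $\ga\in\N^n$ with $|\ga|=l$, I obtain
\[
  F_2(x)=\sum_{\ga\in\N^n}\frac{f^{(n+|\ga|)}(a)}{\ga!}\,x^\ga
              \int_{\Delta_n} t^\ga\,dt.
\]
Lemma \ref{DDNote-L1.1}, in the form $\int_{\Delta_n} t^\ga\,dt=\ga!/\ga?!$ (this is the second equality in \eqref{DDNote-G1.7}, read with the multiindex written as $\ga'$ there), cancels the $\ga!$ and produces \eqref{DDNote-G1.12}.

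The only thing still to justify is convergence and the interchange of sum and integral. On $\Delta_n$ one has $|\sum_k t_k x_k|\le\sum_k |x_k|$, so whenever the closed ball of radius $\sum_k|x_k|$ about $a$ lies in the domain of holomorphy of $f$, the Taylor series of $f^{(n)}$ about $a$ converges uniformly in $t\in\Delta_n$; absolute convergence then licenses the termwise integration above and gives the claimed radius-of-convergence bound. Analyticity of $F_2$ is automatic. I do not anticipate a serious obstacle: the main point is the identification $\sum s_j a_j = a+\sum t_k x_k$ and the recognition of the simplex moment as the $\ga?!$ denominator from Lemma \ref{DDNote-L1.1}.
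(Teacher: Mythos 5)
Your proposal is correct and follows essentially the same route as the paper: translation invariance of divided differences plus Proposition \ref{DDNote-P1.3} for $F_1$ (the paper's citation of Proposition \ref{DDNote-P1.2} at this point is evidently a slip for \ref{DDNote-P1.3}), and the Genocchi--Hermite formula with the $s\leftrightarrow t$ substitution and the second part of Lemma \ref{DDNote-L1.1} for $F_2$. The only difference is that you spell out the uniform-convergence justification for interchanging sum and integral, which the paper leaves as ``straightforward to check.''
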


\begin{proof} The first claim follows from Proposition \ref{DDNote-P1.2} since
\[
[a, a+x_1,\ldots, a+x_n]f = [0,x_1,\ldots,x_n]f(a+\cdot)
  = \MultiSum{\ga}{n} \frac{f^{(n+|\ga|)}(a)}{(|\ga|+n)!} \, x^\ga.
\]

For the proof of \Eqref{DDNote-G1.12} we proceed as in the proof of
Proposition \ref{DDNote-P1.3} but use the second \Eqref{DDNote-G1.7} in Lemma
\ref{DDNote-L1.1}:
\begin{equation}
  \begin{split}
[a,&a+x_1,a+x_1+x_2,\ldots,a+x_1+\ldots+x_n]f\\
   & = \ISimplex n f^{(n)} (a+(s_1+\ldots +s_n) x_1 + (s_2+\ldots+ s_n)
       x_2 +\ldots + s_n x_n) ds\\
       & = \int_{0\le t_n\le \ldots\le t_1\le 1}
      f^{(n)}(a+\sum_j t_j x_j) dt
       \sim \sum_{l=0}^\infty \frac {f^{(n+l)}(a)}{l!}
         \int_{\ldots} \bl \sum_j t_j x_j\br ^l dt\\
         & = \MultiSum \ga n \frac{f^{(n+|\ga|)}(a)}{\ga!(\ga_n+1)\cldots
       (|\ga|+n)}\; x^\ga
        =
       \MultiSum{\ga}{n} \frac{f^{(n+|\ga|)}(a)}{\ga?!} \, x^\ga.
       \qedhere
  \end{split}
\end{equation}
\end{proof}

%**********************************************************
\subsubsection{A combinatorial application}%{{{2
\label{ss.combinatoriaapplication}
%**********************************************************

We give a combinatorial application of \Eqref{DDNote-G1.10},
\eqref{DDNote-G1.11},  and \eqref{DDNote-G1.8}.

Apply \Eqref{DDNote-G1.8} to $[a,a+z_1,\ldots,a+z_n]$. Then
for $N\ge n$
\[\begin{split}
[a,&a+z_1,\ldots,a+z_n]\id^N
   = \sum_{|\ga|\le N-n} a^{N-n-|\ga|} \prod_{j=1}^n
 (a+z_j)^{\ga_j}\\
 & = \sum_{0\le \gb\le \ga, |\ga|\le N-n} a^{N-n-|\gb|} \, z^\gb \,
   {\ga\choose \gb}\\
   & = \sum_{|\gb|\le N-n} a^{N-n-|\gb|} \, z^\gb\, \Bl
      \sum_{\ga\ge\gb, |\ga|\le N-n} {\ga\choose\gb}\Br.
\end{split}\]
The summation is taken over multiindices $\ga,\gb\in\N^n$
with the given restrictions.

On the other hand by Cor. \ref{DDNote-C1.4}
\[\begin{split}
[a,&a+z_1,\ldots,a+z_n]\id^N
   = \MultiSum\gb {n}  \frac{\pl_a^{n+|\gb|} a^N}{(n+|\gb|)!}\,
z^\gb\\
   &= \sum_{|\gb|\le N-n} a^{N-n-|\gb|} \, z^\gb\, {N\choose
n+|\gb|}.
\end{split}\]
Equating coefficients we obtain the following multinomial
identities: Given $\gb\in\N^n$ then
  \begin{align}
    \sum_{0\le\gb\le\ga,|\ga|\le m} {\ga\choose\gb}
       & = {m+n\choose |\gb|+n},\label{DDNote-G1.13}\\
    \sum_{0\le\gb\le\ga,|\ga|= m} {\ga\choose\gb}
       & = {m+n-1\choose |\gb|+n-1}.\label{DDNote-G1.14}
  \end{align}
Note that the second Equ. is an immediate consequence of the first
since\\
${m+n\choose |\gb|+n}-{m-1+n\choose |\gb|+n}={m+n-1\choose
|\gb|+n-1}.$

\Eqref{DDNote-G1.14} should be compared to \cite[Table 169]{GKP94}.
In particular, for $n=2$ it reduces to \cite[Eq.~5.26]{GKP94}.

\bibliography{FCDDbib}   %{{{1
\bibliographystyle{amsalpha-lmp}
\end{document}